 % new version of 11/14/19 

\documentclass[11pt]{amsart}
 \usepackage{graphicx}
 \usepackage{amssymb}
 \usepackage{amsmath}
 \usepackage{amsthm}
 \usepackage{mathtools}
 \usepackage{tikz}
 \usepackage{psfrag}
 \usepackage{xcolor}

\newtheorem{thm}{Theorem}[section]
\newtheorem{lem}[thm]{Lemma}
\newtheorem{pro}[thm]{Proposition}

\theoremstyle{definition}
\newtheorem{defn}{Definition}[section]
\newtheorem{remk}{Remark}[section]

\newcommand{\M}{\mathbb M}

\newcommand{\B}{\mathbf B}
\newcommand{\R}{\mathbb R}

\newcommand{\f}{\mathbf f}
\newcommand{\0}{\mathbf 0}

\newcommand{\tr}{\operatorname{tr}}
\newcommand{\rank}{\operatorname{rank}}

\newcommand{\dv}{\operatorname{div}}

\newcommand{\cof}{\operatorname{cof}}
\newcommand{\adj}{\operatorname{adj}}

\makeatletter
\numberwithin{equation}{section}
\makeatother

\begin{document}

 \author{Baisheng Yan}
 \address{Department of Mathematics\\ Michigan State University\\ East Lansing, MI 48824, USA}
   \email{yanb@msu.edu}

\title[Lipschitz solutions for polyconvex gradient flows]{Convex integration for diffusion equations, II: Lipschitz solutions for  polyconvex gradient flows} 

\subjclass[2010]{Primary 35K40, 35K51, 35D30. Secondary  35F50, 49A20}
\keywords{Convex integration,  gradient flow of polyconvex functional, Condition (OC),  nonuniqueness and instability of Lipschitz solutions}  

\begin{abstract} In this sequel to the  paper \cite{Y}, we construct certain smooth strongly polyconvex functions $F$ on $\M^{2\times 2}$ such that $\sigma=DF$ satisfies the Condition (OC) in that paper.    As a result,   we show that the initial-boundary value problem  for  the gradient flow of such polyconvex energy functionals   is  highly ill-posed even for some smooth initial-boundary data in the sense  that  the problem possesses a weakly* convergent sequence  of  Lipschitz weak solutions whose   limit is  not a weak solution.   
  \end{abstract}

\maketitle

\section{Introduction}

Let $m,n\ge 2$ be integers and $\M^{m\times n}$ be the space of $m\times n$ real matrices.  Given a  function $F\colon \M^{m\times n}\to \R$, we consider the energy functional 
 \begin{equation}\label{energy}
 \mathcal E(u)=\int_\Omega F(D u (x))\,dx, 
 \end{equation}
where  $\Omega\subset \R^n$ is a bounded domain with Lipschitz boundary $\partial\Omega$ and $u\colon \Omega\to \R^m.$ Here, if $u =(u^1,\dots,u^m),$  $Du =(\frac{\partial u^i}{\partial x_k})$ denotes the  Jacobian matrix of $u.$  
It is well-known that weak lower semicontinuity of  functional  $\mathcal E$ in a Sobolev space  is equivalent to   {\em Morrey's quasiconvexity} of function $F;$ see, e.g., {Acerbi \& Fusco} \cite{AF},  {Ball} \cite{Ba}, {Dacorogna} \cite{D}  and {Morrey} \cite{Mo}. Also closely related to  regularity of the energy minimizers of $\mathcal E$ is a stronger quasiconvexity condition (see, e.g., Evans \cite{Ev});
we say that  $F$ is  {\em strongly quasiconvex} if, for some constant $\nu>0$, 
 \begin{equation}\label{qx}
 \int_\Omega (F(A+D\phi(x))-F(A))dx \ge  \frac{\nu}{2} \int_\Omega |D\phi(x)|^2 dx
 \end{equation}
 holds for all  $A\in \M^{m\times n}$ and $\phi\in C_c^\infty(\Omega;\R^m).$
(If $\nu=0$, this condition becomes  the usual Morrey's quasiconvexity.) Since  $m,n \ge 2,$ it is well-known that a quasiconvex  function   may not be convex. 

If $F$ is $C^1$, then condition (\ref{qx}) implies 
\begin{equation}\label{mono-0}
\langle DF(A+p\otimes \alpha)-DF(A),\, p\otimes \alpha\rangle   \ge \nu |p|^2 |\alpha|^2
\end{equation}
for all $A\in \M^{m\times n}, p\in \R^m,$ and $\alpha\in \R^n.$ Note  that, for $C^2$ functions $F$,  condition (\ref{mono-0}) is equivalent to the uniform strong {\em Legendre-Hadamard} condition:
 \begin{equation}\label{LH}
 \sum_{i,j=1}^m\sum_{k,l=1}^n \frac{\partial^2 F(A)}{\partial a_{ik}\partial a_{jl}} p_ip_j\alpha_k\alpha_l \ge \nu |p|^2|\alpha|^2. 
 \end{equation} 
Furthermore,  for a $C^1$ function $F$  satisfying  certain natural growth conditions,  any  minimizer of energy $\mathcal E$ in a Dirichlet class  is a weak solution of the   {\em Euler-Lagrange system}:
\begin{equation}\label{ellip-00}
\dv DF(Du)=0\quad \mbox{in $\Omega.$}
\end{equation}

If $F$ is  a $C^2$ strongly quasiconvex function, then it is well-known   that every   Lipschitz minimizer  of $\mathcal E$ is of 
$C^{1,\alpha}(\Omega_0;\R^m)$ for all $0<\alpha<1$ on some open  set $\Omega_0$ with $| \Omega\setminus \Omega_0|=0;$  see  Evans  \cite{Ev}  and Fusco \& Hutchinson \cite{FH}. In a sharp contrast to such partial regularity of energy minimizers,  M\"uller and \v Sver\'ak  \cite{MSv2} proved  that,  for certain smooth (at least $C^2$) strongly quasiconvex  functions  $F$   on $ \M^{2\times 2},$
 the Euler-Lagrange system  (\ref{ellip-00}) has  Lipschitz weak solutions that are nowhere $C^1$ in  $\Omega;$ such  a result  has been extended  by Sz\'ekelyhidi   \cite{Sz1} to certain  {\em strongly polyconvex}  functions  
 $ F(A)=\frac{\nu}{2}|A|^2 +G(A,\det A) $ on $ \M^{2\times 2},$ where  $G(A,s)$ is smooth and convex on $(A,s)\in \M^{2\times 2}\times \R.$   We refer to \cite{Be, CY, KM} for further related partial regularity results.

In this paper, we study a parabolic  companion   of  system (\ref{ellip-00}).  More specifically, given $T>0$ and  initial function $u_0\colon \bar\Omega\to\R^m$, we study the initial-boundary value problem for  the $L^2$ {\em  gradient flow} of  energy $\mathcal E$ given by
\begin{equation}\label{ibvp-1}
\begin{cases}  u _t=\dv DF(Du ) \;\;\; \mbox{in $\Omega_T=\Omega \times (0,T)$,}
\\
\hfill u (x,t)=u_0(x) \;  (x\in \partial \Omega, \, 0<t<T),\;\; u (x,0)=u_0(x)& (x\in \Omega).
\end{cases}
\end{equation}

For convex  functions $F$,  standard monotonicity and semigroup methods of parabolic  theory would  apply to problem (\ref{ibvp-1}) (see {Br\'ezis} \cite{Br}); in this case,  one  easily sees that (\ref{ibvp-1}) possesses at most one weak solution. 

There is no  general   theory on  solvability of  problem (\ref{ibvp-1}) under the sole condition (\ref{mono-0}).
However, for  certain quasiconvex  functions  $F,$   (\ref{ibvp-1})  may be studied by a {\em time-discretization} method based on  recursively minimizing  the energies 
\[
\mathcal I_{j+1}^{h}(u) = \mathcal E(u)+ \frac{1}{2h} \|u-u^h_{j}\|^2_{L^2(\Omega)},\quad h>0,\;  j=0,1,\dots,
\]
where $u^h_{j}$ is a minimizer of $\mathcal I_{j}^{h}$ with given boundary data $u_0.$ We refer to  {Ambrosio, Gigli \& Savar\'e} \cite{AGS} for  general study of  such a method. In general,  when $h\to 0$, the  time-discretization scheme only produces the so-called  {\em generalized minimizing movements}  (see \cite{AGS}) or the  {\em Young measure solutions} to   (\ref{ibvp-1}) (see, e.g.,  {Kinderlehrer \& Pedregal} \cite{KP} and {Demoulini} \cite{De}). 
Existence of  the true weak solutions  to  (\ref{ibvp-1})  remains essentially open even for general  polyconvex functions  $F;$ see {Evans, Savin \& Gangbo} \cite{ESG} for a  special case of $F=\Phi(\det A)$. The same existence issue has also remained open for {\em elastodynamics} problems as studied in \cite{DST1,DST2}, but see \cite{KK} for a recent result on  existence of the true weak solutions for one-dimensional nonconvex elastodynamics.

 A parabolic problem similar  to problem (\ref{ibvp-1})  has been studied through the stationary elliptic system (\ref{ellip-00}) by M\"uller,  Rieger and  \v Sver\'ak  in \cite{MRS}, where they proved  that for certain smooth quasiconvex functions $F$, for all $\epsilon>0$ and $\alpha\in(0,1)$,  the elliptic system (\ref{ellip-00}) in $\Omega\subset \R^2$ possesses a family of   Lipschitz but nowhere $C^1$  weak solutions $ u (\cdot,t)$ for each $t\in (0,T)$ satisfying  $u (\cdot,0)=0,\, u (x,t)=0 \, (x\in \partial \Omega)$ and  $\| u _t\|_{C^\alpha(\Omega_T)}<\epsilon.$  
Such a function $u= u (x,t)$ is thus  a Lipschitz weak solution to  the  problem
\begin{equation}\label{ibvp-0}
\begin{cases}  u _t-\dv DF(D u )=f \quad  \mbox{in $\Omega_T$,}
\\
 u (x,t)=0 \; (x\in \partial \Omega, \, 0<t<T),\;\;  u (x,0)=0 \;  (x\in \Omega),
\end{cases}
\end{equation}
with  right-hand side $f\equiv u _t\in C^\alpha(\Omega_T;\R^2)$ satisfying $\|f\|_{C^\alpha}<\epsilon.$  However,   their method  cannot be adapted  to  the gradient flow problem (\ref{ibvp-1}), where $f=0$.   

The main purpose of the present paper is establish  that even for smooth strongly polyconvex functions $F$  and smooth initial-boundary functions $u_0$ the initial-boundary value problem (\ref{ibvp-1})  may be  highly ill-posed. Our main result is the following nonuniqueness and instability theorem.

\begin{thm}\label{thm-main-0} There exist  smooth strongly polyconvex functions  $F$ on $ \M^{2\times 2}$ and smooth functions $u_0$ such that problem  $(\ref{ibvp-1})$  possesses a sequence  of  Lipschitz weak solutions that converges weakly* to a function which is   
not  a Lipschitz  weak solution itself.
\end{thm}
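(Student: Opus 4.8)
The plan is to reduce the parabolic gradient flow to a differential inclusion and then invoke convex integration, following the strategy of the companion paper \cite{Y} together with the construction of strongly polyconvex $F$ promised in the abstract. First I would set up the correct notion of subsolution. Writing $v = DF(Du)$, the equation $u_t = \dv DF(Du)$ means the space–time field $(u,v)$ satisfies a linear first–order system — namely $u_t = \dv v$ — together with the pointwise (nonlinear, non-convex) constraint $v = DF(Du)$. Convex integration asks instead for $(Du, v)$ to lie pointwise in a prescribed set $\mathcal{K} \subset \M^{2\times 2}\times \M^{2\times 2}$ encoding this constraint, while allowing $(u,v)$ to range over a larger ``relaxed'' set built from the linear part of the problem. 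The key structural input — this is precisely what Condition (OC) from \cite{Y} is designed to supply — is that for the $\sigma = DF$ we construct, the relevant relaxed set (the appropriate lamination-convex or rank-one-convex hull relative to the wave cone of the operator $(u,v)\mapsto(u_t - \dv v, \; \text{curl-type conditions on } Du)$) is large enough to contain a full neighborhood structure while its extreme points still satisfy the exact inclusion.

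The main steps, in order, would be: (1) Construct the strongly polyconvex $F$ on $\M^{2\times 2}$ of the form $F(A) = \tfrac{\nu}{2}|A|^2 + G(A,\det A)$ with $G$ smooth and convex, arranged so that $\sigma = DF$ satisfies Condition (OC) — this is done in the body of the paper using the $2\times 2$ determinant structure, adapting Sz\'ekelyhidi's elliptic construction \cite{Sz1} to the parabolic/flow setting. (2) Choose smooth initial–boundary data $u_0$ (one expects $u_0$ linear, or affine in $x$ and constant in $t$, so that the ``trivial'' solution $u \equiv u_0$ is an obvious smooth subsolution serving as the starting point for the iteration). (3) Invoke the convex integration machinery of \cite{Y}: Condition (OC) guarantees that starting from any strict subsolution one can add a localized, highly oscillatory perturbation (supported in $\Omega_T$, vanishing on the parabolic boundary) that keeps $(u,v)$ a subsolution, strictly increases the ``$L^2$ gap'' toward the constraint set $\mathcal{K}$, and has small $L^\infty$ norm but controlled (non-small) gradient oscillation. (4) Iterate to obtain, in the limit, genuine Lipschitz weak solutions of \eqref{ibvp-1} — indeed a whole $L^\infty$-dense family of them — all with the same smooth boundary/initial data $u_0$. (5) From this family extract a sequence $u_k$ converging weakly* in $W^{1,\infty}$ (e.g.\ by making the oscillation frequencies diverge while amplitudes stay uniformly bounded, so that $Du_k$ develops genuine oscillation) to some limit $u_\infty$; then argue that $u_\infty$ cannot be a Lipschitz weak solution — here one uses that weak solutions of the gradient flow enjoy the energy (dis)equality $\tfrac{d}{dt}\mathcal{E}(u(\cdot,t)) = -\|u_t\|^2_{L^2} \le 0$, or equivalently some rigidity/lower-semicontinuity property that the oscillating sequence manifestly violates in the limit, so weak* limits escape the solution set.

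I expect step (5), the instability assertion, to be the main obstacle, together with the underlying step (1). For step (5), nonuniqueness alone (a dense family of solutions) is already delicate but is essentially packaged by convex integration; getting a sequence whose weak* limit is provably \emph{not} a solution requires a quantitative handle — one must exhibit the sequence so that some conserved or monotone functional (the energy dissipation identity, or a suitable null-Lagrangian average such as $\int \det Du$) passes to a strictly different value in the limit than any solution could have, exploiting that $\det$ is weakly continuous but the nonlinear constraint $v = DF(Du)$ is not weak* closed. For step (1), the genuine difficulty is reconciling three competing demands on $F$: strong polyconvexity (hence a nontrivial quadratic floor $\tfrac\nu2|A|^2$ and convexity of $G$ in $(A,\det A)$), smoothness, and Condition (OC), which forces the constraint set $\mathcal{K} = \{(A,DF(A))\}$ to have a rich enough rank-one-connected (in the parabolic wave cone) structure to sustain the convex integration — the tension being that convexity tends to destroy exactly the rank-one connections convex integration needs, so the determinant nonlinearity must be exploited carefully, as in \cite{Sz1, MSv2}. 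Once these two are in place, steps (2)–(4) are routine applications of the framework already established in \cite{Y}.
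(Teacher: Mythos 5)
Your overall reduction coincides with the paper's: construct a smooth strongly polyconvex $F$ with $\sigma=DF$ satisfying Condition (OC) of \cite{Y}, choose $u_0$ (indeed affine data suffices), and then obtain the conclusion from \cite[Theorem 1.3 and Corollary 1.4]{Y}. However, your allocation of the work is off in a way that leaves a real gap. Your step (5) is not a separate task at all: once Condition (OC) holds, the existence of a weakly* convergent sequence of Lipschitz weak solutions whose limit is \emph{not} a weak solution is precisely the statement of the cited results of \cite{Y}, so no additional instability argument is made in the paper. Moreover, the substitutes you sketch would be problematic if one did try to run them: the energy dissipation identity you invoke is a statement about sufficiently regular (e.g.\ $C^2$) solutions and is exactly what fails at Lipschitz regularity, and the determinant/null-Lagrangian idea is not developed into an argument. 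The paper only uses the energy identity in the opposite, easy direction, to note that for $u_0(x)=Ax$ the unique $C^2$ solution is the stationary one, so the constructed solutions are genuinely nonclassical.

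The genuine gap is your step (1), which is the entire mathematical content of the theorem and which you assert rather than prove. ``Adapting Sz\'ekelyhidi's elliptic construction'' conceals the actual difficulties. Condition (OC) does not merely ask for a rich rank-one structure in the graph of $DF$; it requires \emph{special} $T_5$-configurations in the class $M_5'$, whose determining rank-one matrices have the constrained form $C_j=\begin{pmatrix}p_j\\(\alpha_j\cdot\delta)q_j\end{pmatrix}\otimes\alpha_j$ subject to \emph{both} linear conditions in (\ref{tn-1}); the second condition $\sum_j q_j\otimes\alpha_j\otimes\alpha_j=0$ encodes the parabolic (divergence-flux) structure and has no counterpart in \cite{Sz1}. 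Finding such a configuration supported by a strongly polyconvex $F$ leads to a strictly more restrictive linear programming problem, solved explicitly in Proposition \ref{linprog-lem}, and the supporting $F_0$ is then built as in Proposition \ref{poly-F0}. Beyond a single configuration, Condition (OC) demands an open set $\Sigma$ swept out by a family of such configurations with vertices in $K_F$; producing it requires perturbing both the configuration and $F$, verifying nondegeneracy of the relevant $20\times 20$ Jacobians and of certain spectral quantities (Lemmas \ref{MATLAB-lem-1} and \ref{MATLAB-lem-2}, established by explicit computation), and an eigenvalue analysis of the matrices $Dz^\nu(Q)$ to prove openness of $\Sigma$ (Theorem \ref{thm-last}). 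As Remark \ref{remk-F3} points out, the genericity/stability argument of \cite{Sz1} may not be available for these special configurations because of possible dimension deficiency, so this part cannot simply be borrowed. Without carrying out this construction, the proposal does not prove the theorem.
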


We  point out that  in the theorem  one can choose $u_0(x)=Ax$ for some $A\in \M^{2\times 2}.$ In this case,  quasiconvexity condition (\ref{qx}) and the standard energy identity for $C^2$ solutions imply  that
 the only $C^2$ solution to problem (\ref{ibvp-1}) is $u(x,t)\equiv Ax;$ thus, the Lipschitz weak solutions  consisting of the  sequence  stated in  the theorem will   eventually be all distinct and   non-$C^2$ on $\bar\Omega_T,$ yielding infinitely many nonclassical, nonstationary  Lipschitz weak solutions to problem (\ref{ibvp-1}). 
 
 Theorem \ref{thm-main-0} follows from the general  results  of Yan \cite[Theorem 1.3 and Corollary 1.4]{Y}  once we have constructed the smooth strongly polyconvex functions $F\colon \M^{2\times 2}\to \R$  with $\sigma=DF$ satisfying the Condition (OC) given by \cite[Definition 3.2]{Y}.  Our construction (see  Proposition \ref{linprog-lem})  relies  on finding a  special $\tau_5$-configuration supported by  a smooth strongly polyconvex function. The search for such  special $\tau_5$-configuration and polyconvex function is accomplished  similarly as in  Sz\'ekelyhidi   \cite[Lemma 3]{Sz1}; however, in our case, we need to solve a more restrictive linear programming problem and rely  on  more specific results (see  Lemmas \ref{MATLAB-lem-1} and \ref{MATLAB-lem-2})  obtained by some necessary   MATLAB  computations.

 \section{Special $\tau_5$-configuration supported by a polyconvex function}\label{def-T5}

We refer to Yan \cite[Sections 2 and 3]{Y} for certain  definitions and notations.    In particular,  for $N\ge 3,$  let $ M'_N$ be the set of  $T_N$-configurations $(X_1,\dots,X_N)$ in $\M^{4\times 2}$ whose determining rank-one matrices are given by $C_j= \begin{pmatrix}p_j\\(\alpha_j\cdot \delta) q_j\end{pmatrix}\otimes  \alpha_j,$ where $p_j, q_j,\alpha_j,\delta \in\R^2$, $\alpha_j\ne 0$, at least three of $\alpha_j$'s are  {\em mutually non-collinear}, and  
\begin{equation}\label{tn-1}
 \sum_{j=1}^N p_j\otimes \alpha_j= 0,\quad  \sum_{j=1}^N q_j\otimes \alpha_j\otimes \alpha_j=0.
\end{equation}   
Let $J=\begin{pmatrix}0&-1\\1&0\end{pmatrix}$ and  let $\mathcal L\colon \M^{2\times 2}\times (\R^2)^2\to \M^{4\times 2}$ be defined  by
\begin{equation}\label{mapL}
\mathcal L([A,(b^i)])=\begin{bmatrix}A\\BJ\end{bmatrix} \quad \forall\, B=(b^i_k)\in \M^{2\times 2}.
\end{equation}
Define $\mathcal M'_N=\mathcal L^{-1}(M_N')$  to be the  set of  {\em special $\tau_N$-configurations} in $\M^{2\times 2}\times (\R^2)^2$ (see \cite[Definition 3.3]{Y}).

Consider a  function  $F\colon \M^{2\times 2}\to \R$ given by
$F(A)=\frac{\epsilon}{2}|A|^2 + G(A,\det A),$   where $G=G(A,s)$ is  smooth  on $\M^{2\times 2}\times \R.$  Then 
  \[
  DF(A)=\epsilon A+G_A(\tilde A) + G_s(\tilde A)\cof A,
  \]
  where $\tilde A=(A,\det A)$ and 
$
\cof A =\begin{pmatrix} a_{22}&-a_{21}\\-a_{12}&a_{11}\end{pmatrix}$ for $A=\begin{pmatrix} a_{11}&a_{12}\\a_{21}&a_{22}\end{pmatrix}.$ 
Let $\mathbb K_F=\{[A, (DF(A))]: A\in \M^{2\times 2}\}$ be the graph of $DF$ and define 
  \begin{equation}\label{ms-K}
K_F=\mathcal L(\mathbb K_F)= \left\{\begin{bmatrix} A\\DF(A)J\end{bmatrix} : A\in \M^{2\times 2}\right \}.
\end{equation}

Let $K=K_F$ and  $K_N=K\times\dots \times K$ ($N$ copies). 
Suppose $(X_1,\dots,X_N)\in M'_N\cap K_N$ with  $X_j=\begin{bmatrix}A_j\\B_j\end{bmatrix}$. Then
\begin{equation}\label{eq-cof}
\epsilon A_j+G_A(\tilde A_j) + G_s(\tilde A_j)\cof A_j=-B_jJ \quad (j=1,\dots,N).
\end{equation}

As in \cite{Sz1}, if we are given $c_j, d_j \in \R$ and $Q_j\in  \M^{2\times 2}$ for $j=1,\dots,N$, then it is well known that there exists a smooth convex function $G\colon \M^{2\times 2}\times \R\to \R$ satisfying $G(\tilde A_j)=c_j,\,G_A(\tilde A_j)=Q_j$ and $G_s(\tilde A_j)=d_j$ provided the following condition holds:
\begin{equation}\label{ineq-1}
c_j-c_i>\langle Q_i, A_j-A_i\rangle +d_i (\det A_j-\det A_i) \quad \forall\, i\ne j.
\end{equation}
If $G_A(\tilde A_j)=Q_j$ and $G_s(\tilde A_j)=d_j$ also satisfy  (\ref{eq-cof}), then   (\ref{ineq-1}) becomes
\begin{equation}\label{ineq-2}
c_i-c_j+d_i\det(A_i-A_j)+\langle A_i-A_j,B_iJ\rangle < -\epsilon \langle A_i,A_i-A_j\rangle\;\; \forall\, i\ne j.
\end{equation}
In particular, if
\begin{equation}\label{ineq-3}
c_i-c_j+d_i\det(A_i-A_j)+\langle A_i-A_j,B_iJ\rangle < 0 \;\; \forall\,i\ne j,
\end{equation}
then we can choose $\epsilon>0$ sufficiently small so that (\ref{ineq-2}) is   satisfied.

\begin{pro}\label{linprog-lem} Let $N=5.$ Then there exists  $(X_1,\dots,X_5)\in M'_5$  with  $X_j=\begin{bmatrix}A_j\\B_j\end{bmatrix}$ such that $(\ref{ineq-3})$ holds for some constants $c_1,\dots,c_5$ and $d_1,\dots,d_5.$
\end{pro}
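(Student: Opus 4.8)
The plan is to split Proposition~\ref{linprog-lem} into two essentially independent tasks: (i) exhibit one special $\tau_5$-configuration $(X_1,\dots,X_5)\in M'_5$, and (ii) verify that, for the matrices it produces, the system (\ref{ineq-3}) can be solved for $c_1,\dots,c_5,d_1,\dots,d_5$. The point that makes this tractable is that once $(X_1,\dots,X_5)$ with $X_j=\begin{bmatrix}A_j\\B_j\end{bmatrix}$ is fixed, all of the scalars $\det(A_i-A_j)$ and $\langle A_i-A_j,B_iJ\rangle$ with $i\ne j$ are determined, so (\ref{ineq-3}) is simply a system of $20$ strict linear inequalities in the $10$ real unknowns $c_j,d_j$ --- a linear programming feasibility problem. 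The function $F$ and the matrices $Q_j=G_A(\tilde A_j)$ need not be addressed at this stage: by the discussion preceding the Proposition, for given $A_j,B_j,c_j,d_j$ and small $\epsilon>0$ the cofactor identity (\ref{eq-cof}) merely \emph{defines} $Q_j$, after which (\ref{ineq-3}) upgrades to (\ref{ineq-2}), which coincides with (\ref{ineq-1}) and is exactly what is later needed to build the convex $G$.

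To produce a candidate, I would follow the parametrization of $M'_5$. Choose directions $\alpha_1,\dots,\alpha_5\in\R^2\setminus\{0\}$ with at least three mutually non-collinear (imposing a cyclic symmetry, e.g.\ fifth roots of unity, to reduce the number of free parameters), a vector $\delta\in\R^2$, and vectors $p_j,q_j\in\R^2$ satisfying the two closing relations in (\ref{tn-1}); then set $C_j=\begin{pmatrix}p_j\\(\alpha_j\cdot\delta)q_j\end{pmatrix}\otimes\alpha_j$. One checks $\sum_{j=1}^5C_j=0$ automatically: the top block vanishes by the first relation in (\ref{tn-1}), and the bottom block $\sum_j(\alpha_j\cdot\delta)\,q_j\otimes\alpha_j$ is the contraction of $\sum_j q_j\otimes\alpha_j\otimes\alpha_j=0$ with $\delta$. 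Picking scalars $\kappa_j>1$ and a base point $P$, the defining relations of a $T_5$-configuration, $X_i=P+\sum_{j<i}C_j+\kappa_iC_i$, then yield matrices $X_i\in\M^{4\times2}$ and hence $A_i,B_i$; since $\mathcal L$ is a linear isomorphism, membership in $\mathcal M'_5=\mathcal L^{-1}(M'_5)$ imposes nothing beyond this. At the very end one must confirm the non-degeneracy built into $M'_5$: $C_j\ne0$, $\kappa_j>1$, the three non-collinear $\alpha_j$, and that no two of the $A_i$ are rank-one connected.

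With the configuration parameters regarded as variables, the quantities $\det(A_i-A_j)$ and $\langle A_i-A_j,B_iJ\rangle$ become explicit polynomial functions of them, so the remaining task is to find parameter values for which the $10$-variable, $20$-inequality linear program (\ref{ineq-3}) is feasible, together with one explicit solution $(c_j,d_j)$. By Farkas' lemma, infeasibility for a given configuration amounts to the existence of a nonnegative combination of the $20$ inequalities that is self-contradictory; one therefore searches the (after symmetry reduction, low-dimensional) parameter space for a configuration avoiding this obstruction and then solves the associated LP. This search, and the explicit verification that one specific choice works, is precisely what Lemmas~\ref{MATLAB-lem-1} and~\ref{MATLAB-lem-2} supply through MATLAB computation; once the explicit $\tau_5$-configuration and the constants $c_j,d_j$ are written down, checking $M'_5$-membership and all $20$ inequalities is a finite arithmetic verification.

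The step I expect to be the main obstacle is reconciling tasks (i) and (ii). A member of $M'_5$ is already heavily constrained --- five rank-one matrices of the rigid coupled form above, both relations in (\ref{tn-1}), a genuine (non-degenerate) $T_5$ structure, pairwise non-rank-one-connected $A_i$ --- whereas (\ref{ineq-3}) forces the five points to lie in a strict ``convex-position'' relation in the lifted variables $(A,\det A,BJ)$, which is delicate to arrange for the loop-like geometry of a $T_5$-configuration. Moreover, because our $F$ has the polyconvex form $\tfrac{\epsilon}{2}|A|^2+G(A,\det A)$ on $\M^{2\times2}$, the determinant terms $d_i\det(A_i-A_j)$ together with the cofactor constraint (\ref{eq-cof}) make this linear program strictly more restrictive than the one in Sz\'ekelyhidi~\cite{Sz1}, so a clean symmetric closed-form ansatz is not expected to land in the feasible region; the genuinely delicate part is the constrained search that pins down admissible parameters, which is why Lemmas~\ref{MATLAB-lem-1}--\ref{MATLAB-lem-2} are invoked rather than an explicit formula.
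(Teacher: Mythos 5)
Your overall strategy --- parametrize $M'_5$ via (\ref{tn-1}), observe that (\ref{ineq-3}) is a linear feasibility problem, and search numerically --- is the right framework, but there are two concrete problems. First, your decomposition fixes the configuration completely before setting up the linear program, leaving only the $10$ unknowns $c_j,d_j$. The paper instead keeps $q_4,q_5$ (and hence, via (\ref{no-free-0}), all the $q_j$ and the lower blocks $B_j$) as \emph{additional} LP variables: since $\det(A_i-A_j)$ depends only on the $p$'s, $\alpha$'s and $\kappa$'s while $\langle A_i-A_j,B_iJ\rangle$ is linear in $q_4,q_5$, the whole of (\ref{ineq-3}) becomes a $20\times 13$ linear system $AX<0$ in $(c_2,\dots,c_5,d_1,\dots,d_5,q_4,q_5)$ once the remaining $14$ nonlinear parameters are fixed. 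Those four extra degrees of freedom are what make the search over the nonlinear parameters succeed after ``numerous trials''; with the configuration frozen first, feasibility of your $10$-variable LP is a far more rigid condition, and there is no reason a candidate produced by a symmetric ansatz would satisfy it (as you yourself concede at the end of your proposal).

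Second, and more seriously, you never actually establish feasibility: you defer the search and the explicit verification to Lemmas \ref{MATLAB-lem-1} and \ref{MATLAB-lem-2}, but those lemmas have nothing to do with Proposition \ref{linprog-lem} --- they assert the non-vanishing of the Jacobian polynomials $j_\nu$ and of the eigenvalue/adjugate quantities needed for the perturbation argument in the later sections. The MATLAB linear-programming computation that proves Proposition \ref{linprog-lem} lives inside its own proof and produces the explicit data ($y_5=2$, $z_3=1$, $z_4=4$, the $p_j$ and $\kappa_j$, $q_4=(-19,0)$, $q_5=(-63,-82)$, and the constants $c_j^0,d_j^0$), after which all $20$ inequalities are checked by finite arithmetic. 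Since the proposition is a constructive existence statement whose only known proof is such an explicit exhibition, an argument that reduces it to a search whose success is asserted but not demonstrated, and that cites the wrong lemmas for that demonstration, has a genuine gap.
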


\begin{proof} 
Suppose $(X_1,\dots,X_5)\in M'_5$ is  determined by $P, \,C_i$ and $\kappa_i$ such that $(\ref{ineq-3})$ holds for some constants $c_1,\dots,c_5$ and $d_1,\dots,d_5.$ Then, by adjusting the  constants, we may assume $P=0,\, c_1=0.$ Let 
\[
C_j=\begin{pmatrix} p_j\\(\alpha_j\cdot\delta)q_j\end{pmatrix} \otimes \alpha_j\quad (j=1,\dots,5),
\]
where  $\alpha_j\ne 0$ and at least three $\alpha_j$'s are mutually noncollinear. 
We attempt  to find such a $T_5$-configuration with $\delta=(1,1)$ and
\[
\alpha_1=(-1,0),\;\alpha_2=(0,-1),\; \alpha_3=(1,z_3), \; \alpha_4=(1,z_4),\;\alpha_5=(y_5,1), \]
where $z_3\ne 0.$ 
From condition (\ref{tn-1}), we  solve $p_1,\,p_2$ as    linear combinations of $p_3,p_4,p_5$ and solve $q_1,q_2,q_3$ as   linear combinations of $q_4,q_5$ to obtain
\begin{equation}\label{no-free-0}
\begin{cases}
p_1=p_3+p_4+y_5p_5, \;\; p_2=z_3p_3+z_4 p_4+p_5,\\
q_1=(z_4/z_3-1)q_4 +(y_5/z_3-y_5^2)q_5,\\
q_2=(z_3z_4-z_4^2 )q_4+(y_5z_3-1)q_5, \;\;  q_3=-(z_4/z_3)q_4-(y_5/z_3)q_5.
\end{cases}
\end{equation}
Then $\delta_{ij}=\det(A_i-A_j)$ becomes a function of $y_5,z_3,z_4, \kappa_i, p_3,p_4$ and $p_5$, while $\mu_{ij}=\langle A_i-A_j,B_iJ\rangle$ becomes  a linear function of $q_4,q_5$ with coefficients depending on 14 free-variables $y_5,z_3,z_4,\kappa_i, p_3,p_4$ and $p_5$. With a fixed choice of $(y_5,z_3,z_4,\kappa_i, p_3,p_4,p_5)\in \R^{14},$ condition (\ref{ineq-3}) becomes a system of 20 linear inequalities on  13  variables:
$c_2,\dots,c_5,\, d_1,\dots,d_5,\, q_4,\, q_5.$ The matrix  representing  the LHS of (\ref{ineq-3}) is a $20\times 13$ matrix $A$  depending  on 
 $(y_5,\, z_3,\, z_4,\, \kappa_i, \, p_3,\, p_4, \, p_5).$  Using the {\em linear programming}  by MATLAB, after numerous  trials on different choices of $(y_5,\, z_3,\, z_4,\, \kappa_i, \, p_3,\, p_4, \, p_5)$ for the matrix $A$, the linear inequality  $AX<0$ finally becomes feasible  with 
\[
\begin{cases}
y_5=2,\; z_3=1,\; z_4=4, \\
p_3=(1,0),\; p_4=(1,1),\; p_5=(0,1),\\
\kappa_1= 2,\; \kappa_2=3,\;  \kappa_3=4,\;  \kappa_4=3,\;  \kappa_5=2.\end{cases}
\]
With this choice of parameters,  a feasible solution $X$ for $AX<0$ gives  
\[
 q_4=(-19,0),\quad q_5=(-63,-82).
\] 
Hence by (\ref{no-free-0}) we have
\[
p_1 =(2,3),\;  p_2 =(5,5),\; \; q_1 =(69,164),\; q_2 =(165,-82),\; q_3 =(202,164),
\]
and therefore the  rank-one matrices $C^0_j=\begin{pmatrix} p_j\\(\alpha_j\cdot\delta)q_j\end{pmatrix}\otimes \alpha_j$ are  given by
\[
C^0_1=\begin{pmatrix}-2&0\\-3&0\\69&0\\164&0\end{pmatrix},\;\; C^0_2=\begin{pmatrix}0&-5\\0&-5\\0&165\\0&-82\end{pmatrix},\;\; 
C^0_3=\begin{pmatrix}1&1\\0&0\\404&404\\328&328\end{pmatrix},
\]
\[
C^0_4=\begin{pmatrix}1&4\\1&4\\-95&-380\\0&0\end{pmatrix},\;\; 
C^0_5=\begin{pmatrix} 0&0\\2&1\\-378&-189\\-492&-246\end{pmatrix}.
\]
The corresponding $T_5$-configuration   $(X^0_1,\dots,X^0_5)\in M'_5$ is illustrated in Fig.\,\ref{fig1}, where $X^0_i=\begin{pmatrix}A^0_i\\B^0_i\end{pmatrix}$ with
\[
A^0_1=\begin{pmatrix} -4&0\\-6&0\end{pmatrix},\; A^0_2=\begin{pmatrix} -2&-15\\-3&-15\end{pmatrix},\; A^0_3=\begin{pmatrix} 2&-1\\-3&-5\end{pmatrix},\]
\[
A^0_4=\begin{pmatrix} 2&8\\0&7\end{pmatrix},\; A^0_5=\begin{pmatrix} 0&0\\2&1\end{pmatrix}.
\]
\[
B^0_1=\begin{pmatrix} 138&0\\328&0\end{pmatrix},\; B^0_2=\begin{pmatrix} 69&495\\164&-246\end{pmatrix},\; B^0_3=\begin{pmatrix} 1685&1781\\1476&1230\end{pmatrix},\]
\[
B^0_4=\begin{pmatrix} 188&-571 \\492 &246\end{pmatrix},\; B^0_5=\begin{pmatrix} -378&-189\\-492&-246\end{pmatrix}.
\]
For these $A^0_j$ and $B^0_j$, it is straightforward (with the aid of  MATLAB) to verify that  condition (\ref{ineq-3}) is satisfied with the constants:
\[
(c^0_1,\dots,c^0_5,d^0_1,\dots,d^0_5)=(0,-3650,-3318,5044,580,58,-7.5,772,57,376).
\]
This completes the proof.
 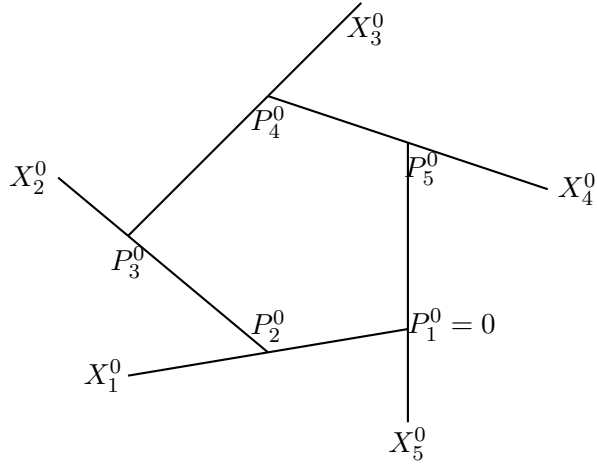
\begin{figure}[ht]
\begin{center}
\begin{tikzpicture}[scale =.62]
\draw[thick] (-5,-2)--(1,-1);
  \draw(-4.9,-2) node[left]{$X^0_1$};
      \draw(0.1,6) node[below]{$X^0_3$};
   \draw(3.1,-0.9) node[left]{$P^0_1=0$};
    \draw(1.3,3) node[below]{$P^0_5$};
         \draw(-5,1) node[below]{$P^0_3$};
          \draw(-2,4) node[below]{$P^0_4$};
     \draw[thick] (1,-1)--(1,3);
  \draw[thick] (1,-1)--(1,-3);
  \draw[thick] (-2,-1.5)--(-5,1);
   \draw[thick] (-6.5,2.25)--(-5,1);
   \draw[thick] (-5,1)--(-2,4);
     \draw[thick] (0,6)--(-2,4);
    \draw[thick] (-2,4)--(1,3);
       \draw[thick] (1,3)--(4,2);
     \draw(1,-2.9) node[below]{$X_5^0$};
          \draw(-2,-1.5) node[above]{$P^0_2$};
    \draw(4,2) node[right]{$X_4^0$};
   \draw(-6.5,2.25) node[left]{$X^0_2$};
\end{tikzpicture}
\end{center}
\caption{The special $T_5$-configuration $(X^0_1,\dots,X^0_5)$ with pentagon $[P_1^0P_2^0P_3^0P_4^0P_5^0]$ constructed  in Proposition \ref{linprog-lem}. } 
\label{fig1}
\end{figure}
\end{proof}

 \begin{remk} \label{remk-61}   Our construction   is substantially  different from that of  \cite[Lemma 3]{Sz1} because the rank-one matrices $C_j^0$ satisfy more restrictive conditions. The special and more restrictive $T_5$-configuration  constructed above  provides a correct $T_5$-configuration  for \cite[Example 1]{Sz1} as the example of  $T_5$-configuration there was  incorrectly printed.  
 \end{remk}

\begin{pro}\label{poly-F0} Let $(X_1^0,\dots,X_5^0)\in M'_5$ be the special $T_5$-configuration constructed above.  Then for  all sufficiently small $\epsilon>0$ there exists a smooth convex function $G\colon \M^{2\times 2}\times \R\to \R$  such that the strongly polyconvex function $F_0(A)=\frac{\epsilon}{2}|A|^2 +G(A,\det A)$ satisfies that  $X_j^0 \in K_{F_0}$ for $j=1,\dots,5.$
\end{pro}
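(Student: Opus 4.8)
The plan is to construct $G$ by prescribing its value, its $A$-gradient and its $s$-derivative at the five points $\tilde A_j^0=(A_j^0,\det A_j^0)$, $j=1,\dots,5$, and then to invoke the standard existence result for smooth convex functions with prescribed first-order jets quoted just before Proposition \ref{linprog-lem}, whose hypothesis is precisely inequality (\ref{ineq-1}). First I would note that, by the formula $DF_0(A)=\epsilon A+G_A(\tilde A)+G_s(\tilde A)\cof A$ and the description (\ref{ms-K}) of $K_{F_0}$, the requirement $X_j^0\in K_{F_0}$ (with $X_j^0=\begin{bmatrix}A_j^0\\ B_j^0\end{bmatrix}$) is nothing other than equation (\ref{eq-cof}) with $A_j=A_j^0$, $B_j=B_j^0$. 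Hence I am free to \emph{define} the interpolation data
\[
c_j:=c_j^0,\qquad d_j:=d_j^0,\qquad Q_j:=-B_j^0 J-\epsilon A_j^0-d_j^0\,\cof A_j^0,
\]
where $c_j^0,d_j^0$ are the constants furnished by Proposition \ref{linprog-lem}. With this choice (\ref{eq-cof}) holds by construction, so the moment a smooth convex $G\colon\M^{2\times2}\times\R\to\R$ with $G(\tilde A_j^0)=c_j$, $G_A(\tilde A_j^0)=Q_j$, $G_s(\tilde A_j^0)=d_j$ exists, the function $F_0(A)=\frac{\epsilon}{2}|A|^2+G(A,\det A)$ is smooth and strongly polyconvex (a sum of the $\epsilon$-quadratic and a polyconvex term), and satisfies $DF_0(A_j^0)=\epsilon A_j^0+Q_j+d_j^0\cof A_j^0=-B_j^0 J$, whence $DF_0(A_j^0)J=-B_j^0 J^2=B_j^0$, i.e. $X_j^0\in K_{F_0}$.

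So the task reduces to verifying hypothesis (\ref{ineq-1}) for this data. The only genuine computation is to insert $Q_i=-B_i^0 J-\epsilon A_i^0-d_i^0\cof A_i^0$ into (\ref{ineq-1}) and simplify using the polarization identity for the determinant on $\M^{2\times2}$, $\det A_j=\det A_i+\langle\cof A_i,\,A_j-A_i\rangle+\det(A_j-A_i)$, equivalently $\langle\cof A_i,\,A_j-A_i\rangle=\det A_j-\det A_i-\det(A_j-A_i)$. The terms $\pm d_i^0(\det A_j-\det A_i)$ then cancel, and (\ref{ineq-1}) collapses to exactly (\ref{ineq-2}) once one uses $\langle A_i-A_j,B_iJ\rangle=-\langle B_iJ,A_j-A_i\rangle$, $\det(A_i-A_j)=\det(A_j-A_i)$, and $-\epsilon\langle A_i,A_i-A_j\rangle=\epsilon\langle A_i,A_j-A_i\rangle$. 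This is precisely the passage from (\ref{ineq-1}) to (\ref{ineq-2}) sketched in the lines before Proposition \ref{linprog-lem}; here I just run it in reverse for the concrete matrices $A_j^0,B_j^0$.

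The proof then closes exactly as in the remark following (\ref{ineq-3}): Proposition \ref{linprog-lem} provides (\ref{ineq-3}), i.e. (\ref{ineq-2}) with right-hand side replaced by $0$, as a system of finitely many (twenty) strict inequalities; since $\epsilon\mapsto-\epsilon\langle A_i^0,A_i^0-A_j^0\rangle$ is continuous and vanishes at $\epsilon=0$, for all sufficiently small $\epsilon>0$ inequality (\ref{ineq-2}) — hence (\ref{ineq-1}) — holds, and the quoted interpolation result yields the required smooth convex $G$. I expect no real obstacle here: the substantive difficulty lies upstream in Proposition \ref{linprog-lem} (the linear-programming/MATLAB search), and all that needs care in the present argument is the sign-and-convention bookkeeping in the equivalence (\ref{ineq-1})$\Leftrightarrow$(\ref{ineq-2}) and the trivial observation that ``sufficiently small $\epsilon$'' may be chosen uniformly over the finite index set.
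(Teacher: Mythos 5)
Your proposal is correct and follows essentially the same route as the paper: define $Q_j^0=-\epsilon A_j^0-B_j^0J-d_j^0\cof A_j^0$, note that (\ref{ineq-3}) from Proposition \ref{linprog-lem} yields (\ref{ineq-2}) (hence (\ref{ineq-1})) for all sufficiently small $\epsilon>0$, and invoke the interpolation result of \cite[Lemma 3]{Sz1} to produce the smooth convex $G$ with the prescribed first-order data, so that $DF_0(A_j^0)=-B_j^0J$ and $X_j^0\in K_{F_0}$. The only difference is presentational: you re-derive the equivalence (\ref{ineq-1})$\Leftrightarrow$(\ref{ineq-2}) via the determinant expansion, whereas the paper simply cites the reduction already carried out before Proposition \ref{linprog-lem}.
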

\begin{proof}  Let $c_1^0,\dots,c_5^0$ and $d_1^0,\dots,d_5^0$ be the constants determined above. Then (\ref{ineq-2}) is satisfied for all sufficiently small $\epsilon>0.$  For such an $\epsilon>0$, let $Q^0_j=-\epsilon A^0_j-B^0_jJ -d^0_j\cof A^0_j.$  Following the proof of \cite[Lemma 3]{Sz1},   there exists a smooth convex function 
$G\colon \M^{2\times 2}\times \R\to \R$ such that $G(\tilde A^0_j)=c_j^0$, $G_A(\tilde A_j^0)=Q_j^0$ and $G_s (\tilde A_j^0)=d_j^0$ for $j=1,\dots,5.$ Let $F_0(A)=\frac{\epsilon}{2}|A|^2 +G(A,\det A).$ Then $F_0$ is smooth and strongly polyconvex; clearly, for each $j=1,\dots,5$, one has $DF_0(A^0_j)=\epsilon A^0_j+G_A(\tilde A^0_j) + G_s (\tilde A^0_j)\cof A^0_j=-B_j^0J,$ which proves $X_j^0\in K_{F_0}.$
\end{proof}

\section{The perturbations of $(X^0_1,\dots,X^0_5)$ and $F_0$}

Let $(X^0_1,\dots,X^0_5)\in M'_5$ and strongly polyconvex function $F_0$ be determined as  in Propositions \ref{linprog-lem} and \ref{poly-F0}  above. The  parameters of    $(X^0_1,\dots,X^0_5)$ are listed as follows for later usage:
\begin{equation}\label{free-para}
\begin{cases}
P^0=0,\; \delta^0=(1,1),\\
\alpha_1^0=(-1,0), \alpha_2^0=(0,-1), \alpha^0_3=(1,1), \alpha_4^0=(1,4), \alpha_5^0=(2,1),\\
 p_1^0=(2,3),\; p_2^0=(5,5),\; p_3^0=(1,0),\; p_4^0=(1,1),\;p_5^0=(0,1),\\
 q_1^0=(69,164),\, q_2^0=(165,-82),\, q_3^0=(202,164),\\
  q_4^0=(-19,0),\, q_5^0=(-63,-82),\\
\kappa_1^0= 2,\; \kappa_2^0= 3,\; \kappa_3^0= 4,\; \kappa_4^0=3,\; \kappa_5^0=2.
\end{cases}
\end{equation}

We  aim to construct the required strongly polyconvex functions $F$  as a perturbation of   function $F_0,$  with the set $K_F$ containing $(X^0_1,\dots,X^0_5)$ and all suitable perturbations of $(X^0_1,\dots,X^0_5).$

First of all, to perturb the function $F_0$, let $B_1(0)$ be the unit ball in $\M^{2\times 2}$ and  $\zeta\in C^\infty_c(B_1(0))$ be such that
$0\le \zeta(A)\le 1$ and $\zeta(0)=1.$ Given $r>0$ and tensor $H=(H^{pqij})$ with $H^{pqij}=H^{ijpq}\in\R$ for all $i,j,p,q\in\{1,2\}$ (thus $H$ can be viewed as a $4\times 4$ symmetric matrix), define the function
\[
V_{H,r}(A)= \frac12 \zeta(A/r) \sum_{i,j,p,q\in\{1,2\}} H^{ijpq}a_{ij}a_{pq}
\]
for $A=(a_{ij})\in\M^{2\times 2}.$  Then $V_{H,r}\in C^\infty_c(\M^{2\times 2})$ has support in $B_r(0),$ and  
\begin{equation}\label{cut-off}
\begin{cases}
V_{H,r}(0)=0,\; DV_{H,r}(0)=0,\; D^2 V_{H,r}(0)=H,\\
|D^2 V_{H,r}(A)|\le C |H| \quad \forall A\in \M^{2\times 2},
\end{cases}
\end{equation}
where $C$ is a constant independent of $H,r.$

Let 
$r_0=\min_{i\ne j} |A_i^0-A_j^0|>0$.  
The functions $F$ will be constructed as a perturbation of $F_0$ of the form:
\begin{equation}\label{def-F}
F(A)=F_0(A)+\sum_{j=1}^5 V_{\tilde H_j,r_0}(A-A_j^0),
\end{equation}
with $\tilde H_1,\dots,\tilde H_5$ to be chosen later.   
Note that
\begin{equation}\label{prop-F}
\begin{cases} DF(A_j^0)=DF_0(A_j^0),\\
D^2 F(A_j^0)=D^2 F_0(A_j^0)+\tilde H_j  \end{cases} \quad (j=1,\dots,5);
\end{equation}
thus, the specific $T_5$-configuration $(X_1^0,\dots,X_5^0)$ also lies on $K_F.$  
Furthermore, $F$ is strongly polyconvex if 
$\sum_{j=1}^5 |D^2V_{\tilde H_j,r_0}(A)|<\epsilon/2,$ which will be satisfied if 
\begin{equation}\label{def-F2}
\sum_{j=1}^5 |\tilde H_j|<\frac{\epsilon}{2C}, \;\; \mbox{with $C$ being the constant in (\ref{cut-off}).}
\end{equation}

We now  perturb  the $T_5$-configuration  $(X_1^0,\dots,X_5^0)$ around  the parameters in  (\ref{free-para}) by the  following 28 parameters (notice   that  we do not perturb $\delta_0$):
\[
\begin{cases}
Q\in \M^{4\times 2}\cong \R^8, \\
\alpha_1=(-1,z_1), \alpha_2=(y_2,-1), \alpha_3=(1,z_3), \alpha_4=(1,z_4), \alpha_5=(y_5,1),\\
 p_3=(p_{31},p_{32}),\; p_4=(p_{41},p_{42}),\;p_5=(p_{51},p_{52}), \\
q_4=(q_{41},q_{42}),\;q_5=(q_{51},q_{52}),\\
\kappa_1,\;\kappa_2 ,\;\kappa_3,\;\kappa_4,\;\kappa_5.
\end{cases}
\]
Let $
Y=(z_1,y_2,z_3,z_4,y_5, p_{3},p_{4},p_{5}, q_{4}, q_{5}, \kappa_1,\dots,\kappa_5)\in\R^{20},$ and define  $p_1,p_2$ and  $q_1,q_2,q_3$  according to  (\ref{tn-1})  by
\[
\begin{split}
&p_1=\frac{y_2z_3+1}{1-y_2z_1}p_3+\frac{y_2z_4+1}{1-y_2z_1}p_4+\frac{y_2+y_5}{1-y_2z_1}p_5,\\
&p_2=\frac{z_1+z_3}{1-y_2z_1}p_3+\frac{z_1+z_4}{1-y_2z_1}p_4+\frac{y_5z_1+1}{1-y_2z_1}p_5,\\
&q_1=\frac{(y_2 z_4 + 1)(z_3 - z_4)}{(z_1 + z_3)(y_2z_1 - 1)}q_4+\frac{(y_2 + y_5)(y_5z_3 - 1)}{(z_1 + z_3)(y_2z_1 - 1)}q_5,\\
&q_2=-\frac{(z_1 + z_4)(z_3 - z_4)}{(y_2z_1 - 1)(y_2z_3 + 1)}q_4-\frac{(y_5z_1 + 1)(y_5z_3 - 1)}{(y_2z_1 - 1)(y_2z_3 + 1)}q_5,\\
&q_3=-\frac{(z_1 + z_4)(y_2z_4 + 1)}{(z_1 + z_3)(y_2z_3 + 1)}q_4-\frac{(y_2 + y_5)(y_5z_1 + 1)}{(z_1 + z_3)(y_2z_3 + 1)}q_5.
\end{split}
\]

For $j=1,\dots,5,$ define  $C_j=C_j(Y)=\begin{pmatrix}p_j\\(\alpha_j\cdot \delta^0)q_j\end{pmatrix}\otimes \alpha_j$,  and for  $\nu=1,\dots,5,$   define
\[
 \begin{split}
&Z^\nu_1(Y)  = \kappa_\nu C_\nu,\\
&Z^\nu_2 (Y)  =C_\nu +\kappa_{\nu+1} C_{\nu+1},\\
& Z^\nu_3 (Y) =C_\nu +C_{\nu+1}+\kappa_{\nu+2} C_{\nu+2},  \\
&Z^\nu_4 (Y)  =C_\nu +C_{\nu+1}+C_{\nu+2}+\kappa_{\nu+3} C_{\nu+3}, \\
&Z^\nu_5 (Y)  = C_\nu +C_{\nu+1}+C_{\nu+2}+ C_{\nu+3}+\kappa_{\nu+4} C_{\nu+4}.
\end{split}
\]
Henceforth,  {\em the indices  $\nu, j$ appearing in  $Z^\nu_j$, $C_j$ and $\kappa_j$ are always taken modulo 5.}  For all $\nu$ and $j$,  define
   \begin{equation}\label{Q-switch-0}
  X^\nu_j(Y,Q)=Q+Z^\nu_j(Y).
 \end{equation}
 Then $(X^\nu_1(Y,Q),\dots,X^\nu_5(Y,Q))\in M'_5$ for all  $\nu$ and  $(Y,Q)$. Moreover,  
  \begin{equation}\label{Q-switch-3}
 X^\nu_j(Y^0,P^0_\nu)=X^0_{\nu+j-1} \quad  \forall\, \nu,\,   j \mod 5,
  \end{equation} 
  where $Y^0=(0,0,1,4,2,p_3^0,p_4^0,p_5^0,q_4^0,q_5^0,2,3,4,3,2)\in \R^{20} $ and $P_\nu^0$'s are the vertices of the pentagon as shown   in Figure \ref{fig1}.
  
  The following result is immediate.
  
\begin{lem}\label{rel-X} Let $C_j=C_j(Y)$ be defined as above. Define
\[
 \begin{split}
&P^\nu_1(Y,Q) = Q,\\
& P^\nu_2 (Y,Q) =Q+ C_\nu, \\
&P^\nu_3 (Y,Q) =Q+ C_\nu+ C_{\nu+1},  \\
&P^\nu_4 (Y,Q) =Q+ C_\nu+ C_{\nu+1}+C_{\nu+2},   \\ 
&P^\nu_5 (Y,Q) = Q+ C_\nu+ C_{\nu+1}+C_{\nu+2}+ C_{\nu+3}.  
\end{split}
\]
Then $X^\nu_{j}(Y,Q)=X^{\nu+i-1}_{j-i+1}(Y,P^\nu_{i}(Y,Q))$ for all $\nu,j,i \mod 5,$ with $j\ge i.$  
\end{lem}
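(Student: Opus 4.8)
The plan is to treat the statement as what it is: a purely formal identity (an equality of matrices in $\M^{4\times 2}$) expressing that the five points $X^\nu_j(Y,Q)$ of the $T_5$-configuration based at $Q$ are recovered, up to a cyclic relabelling of indices by $i-1$, from the $T_5$-configuration based at the $i$-th vertex $P^\nu_i(Y,Q)$. Concretely, I would unwind the definitions of $Z^\nu_j$, $X^\nu_j$ and $P^\nu_i$ into closed-form sums of the matrices $C_k=C_k(Y)$ and then match the two sides term by term.

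First I would record, for $j=1,\dots,5$, the closed form
\[
Z^\nu_j(Y)=\sum_{k=\nu}^{\nu+j-2}C_k+\kappa_{\nu+j-1}C_{\nu+j-1},
\]
read off directly from the displayed recursion (empty sum when $j=1$, all indices taken modulo $5$), so that $X^\nu_j(Y,Q)=Q+Z^\nu_j(Y)$; and, straight from the definition of the vertices, $P^\nu_i(Y,Q)=Q+\sum_{k=\nu}^{\nu+i-2}C_k$ for $i=1,\dots,5$ (again an empty sum when $i=1$).

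Then I would substitute. Writing $\mu:=\nu+i-1$ and using that $1\le j-i+1\le 5$ (since $1\le i\le j\le 5$), the closed form applies to $Z^\mu_{j-i+1}(Y)$, and, simplifying $\mu+(j-i+1)-2=\nu+j-2$ and $\mu+(j-i+1)-1=\nu+j-1$,
\[
X^{\mu}_{j-i+1}\bigl(Y,P^\nu_i(Y,Q)\bigr)=Q+\sum_{k=\nu}^{\nu+i-2}C_k+\sum_{k=\nu+i-1}^{\nu+j-2}C_k+\kappa_{\nu+j-1}C_{\nu+j-1}.
\]
The two inner sums run over the consecutive, disjoint index blocks $\{\nu,\dots,\nu+i-2\}$ and $\{\nu+i-1,\dots,\nu+j-2\}$, whose union is $\{\nu,\dots,\nu+j-2\}$, so the right-hand side collapses to $Q+Z^\nu_j(Y)=X^\nu_j(Y,Q)$, which is the claimed identity.

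The only points needing care are bookkeeping ones: the empty-sum cases $i=1$ (trivial, as $P^\nu_1=Q$) and $j=i$ (where $Z^\mu_1=\kappa_\mu C_\mu$); and the observation that, since $\{\nu,\dots,\nu+j-2\}$ contains $j-1\le 4$ integers, its reduction modulo $5$ produces no collisions, so concatenating the two sums is legitimate and no relation among the $C_k$ (such as $\sum_{k=1}^5 C_k=0$) is invoked. Accordingly there is no genuine obstacle — this is why the statement can be asserted as ``immediate''; the whole content is the telescoping structure deliberately built into the definitions of $Z^\nu_j$ and $P^\nu_i$.
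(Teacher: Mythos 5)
Your verification is correct and is exactly the telescoping computation the paper has in mind when it declares the lemma ``immediate'' without writing out a proof. Nothing is missing: the closed forms for $Z^\nu_j$ and $P^\nu_i$, the index shift $\mu=\nu+i-1$, and the concatenation of the two disjoint consecutive index blocks are precisely the content of the statement.
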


In what follows, we  fix the identifications: $\M^{2\times 2}\cong \R^4$ and  $\M^{4\times 2}\cong \R^8$ as follows, in the same way as used in MATLAB:
\begin{equation}\label{identification}
\begin{pmatrix}x_{11}&x_{12}\\x_{21}&x_{22}\end{pmatrix}\cong \begin{pmatrix}x_{11}\\x_{21}\\x_{12}\\x_{22} \end{pmatrix},  \quad \begin{pmatrix}x_{11}&x_{12}\\x_{21}&x_{22}\\x_{31}&x_{32}\\x_{41}&x_{42}\end{pmatrix}\cong \begin{pmatrix}x_{11}\\x_{21}\\x_{31}\\x_{41}\\x_{12}\\x_{22}\\x_{32}\\x_{42}\end{pmatrix}.
\end{equation}
  
\begin{lem}\label{rk5} For each $\nu=1,\dots,5$, the $8\times 20$ matrix $\dfrac{\partial Z^\nu_1}{\partial Y}$ has rank less than or equal to 5.
\end{lem}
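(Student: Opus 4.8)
The plan is to exploit that, for each fixed $\nu$, the map $Y\mapsto Z^\nu_1(Y)$ takes values in the five-dimensional cone of rank-one matrices inside $\M^{4\times2}$, so that its differential can have rank at most $5$ no matter where in $Y$-space it is evaluated; this is far cleaner than attempting to compute the $8\times20$ Jacobian by hand.

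First I would factor the map through the tensor-product map. Writing $v_\nu(Y)=\begin{pmatrix}p_\nu(Y)\\(\alpha_\nu(Y)\cdot\delta^0)\,q_\nu(Y)\end{pmatrix}\in\R^4$, we have $C_\nu(Y)=v_\nu(Y)\otimes\alpha_\nu(Y)$ and hence
\[
Z^\nu_1(Y)=\kappa_\nu\,C_\nu(Y)=w(Y)\otimes\beta(Y),\qquad w(Y):=\kappa_\nu\,v_\nu(Y)\in\R^4,\quad \beta(Y):=\alpha_\nu(Y)\in\R^2 .
\]
Both $w$ and $\beta$ are smooth (in fact rational) functions of $Y$ on the open set where the denominators $1-y_2z_1$, $z_1+z_3$, $y_2z_3+1$ occurring in the formulas for $p_1,p_2,q_1,q_2,q_3$ do not vanish; this set contains $Y^0$, at which all three equal $1$. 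So on that set $Z^\nu_1=T\circ\Phi$, where $\Phi(Y)=(w(Y),\beta(Y))\in\R^4\times\R^2$ and $T\colon\R^4\times\R^2\to\M^{4\times2}$ is $T(w,\beta)=w\otimes\beta$.

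Next I would bound $\rank DT$. For all $(w,\beta),(w',\beta')$ one has $DT(w,\beta)[w',\beta']=w'\otimes\beta+w\otimes\beta'$, so the image of $DT(w,\beta)$ lies in the subspace $(\R^4\otimes\beta)+(w\otimes\R^2)\subset\M^{4\times2}$. This subspace has dimension at most $4$ when $w=0$ or $\beta=0$, and at most $4+2-1=5$ otherwise, since then its two summands are $4$- and $2$-dimensional with intersection containing the nonzero matrix $w\otimes\beta$. Hence $\rank DT(w,\beta)\le5$ for every $(w,\beta)$, and by the chain rule $\frac{\partial Z^\nu_1}{\partial Y}(Y)=DT(\Phi(Y))\circ D\Phi(Y)$ has rank at most $5$ for every admissible $Y$, in particular near $Y^0$. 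There is no real obstacle in this argument; the only point requiring a word of care is the rational character of $p_1,p_2,q_1,q_2,q_3$, which confines the differentiability of $Z^\nu_1$ to the neighborhood of $Y^0$ described above — exactly the region where the lemma is used later.
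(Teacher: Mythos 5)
Your proof is correct, and it takes a recognizably different route from the paper's. The paper expands $\dfrac{\partial Z^\nu_1}{\partial Y}$ directly by the product rule into six rank-one outer products (two coming from $\partial\alpha_\nu/\partial Y$ and four from the derivatives of the entries of $\kappa_\nu\begin{pmatrix}p_\nu\\(\alpha_\nu\cdot\delta^0)q_\nu\end{pmatrix}$), and then uses the specific normalization that one component of each $\alpha_\nu$ is held constant, so one of the first two summands vanishes and only five rank-one terms remain. You instead factor $Z^\nu_1=T\circ\Phi$ through the bilinear map $T(w,\beta)=w\otimes\beta$ and bound $\rank DT(w,\beta)\le\dim\bigl((\R^4\otimes\beta)+(w\otimes\R^2)\bigr)\le 4+2-1=5$; in effect you get from $6$ down to $5$ by observing that the two subspaces overlap in $\span\{w\otimes\beta\}$, rather than by killing one summand outright. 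Your argument is the more conceptual one — it is just the statement that the rank-$\le 1$ cone in $\M^{4\times 2}$ is five-dimensional, so any smooth map into it has differential of rank at most $5$ — and it does not use the constancy of one component of $\alpha_\nu$ at all, whereas the paper's computation is shorter on the page and tied to the explicit parametrization. Your remark about the denominators $1-y_2z_1$, $z_1+z_3$, $y_2z_3+1$ is the right caveat: they all equal $1$ at $Y^0$, so the factorization is valid on the neighborhood where the lemma is actually invoked.
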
 
\begin{proof} Write $Z^\nu_1(Y)=\gamma_\nu\otimes \alpha_\nu,$ where $\gamma_\nu=\kappa_\nu\begin{pmatrix}p_\nu\\(\alpha_\nu\cdot \delta^0)q_\nu\end{pmatrix}=(f_i(Y))\in \R^{4}.$ Set $\alpha_\nu=(a,b)\in\R^2,$ where one of $a,b$ is constant and the other is a variable in  $Y$. Thus, with the identification (\ref{identification}), as a $8\times 20$ matrix,
\[
\frac{\partial Z^\nu_1}{\partial Y}=
\begin{pmatrix}f_{1}\\f_{2}\\f_3\\f_{4}\\0\\0\\0\\0\end{pmatrix}\otimes  \frac{\partial a}{\partial Y}+\begin{pmatrix} 0\\ 0\\ 0\\0\\f_1\\f_2\\f_3\\f_{4}\end{pmatrix}\otimes  \frac{\partial b}{\partial Y}+\begin{pmatrix} a\\0\\0\\ 0\\b\\0\\0\\0 \end{pmatrix}\otimes  \frac{\partial f_1}{\partial Y}+\dots + \begin{pmatrix} 0\\0\\ 0\\ a\\0\\0\\0\\b\end{pmatrix}\otimes  \frac{\partial f_4}{\partial Y}.
\]
Since one of $\partial a/\partial Y$ or $\partial b/\partial Y$ vanishes, we easily have $\rank \frac{\partial Z^\nu_1}{\partial Y}\le 5.$
\end{proof}

\section{The construction of polyconvex functions $F$}

Suppose that $F$ is a function defined by  (\ref{def-F}) with $\tilde H_j$ to be chosen. To study the set $K_F$,  we define the function  $\Phi\colon \M^{4\times 2}\cong \R^8  \to \M^{2\times 2}\cong \R^4$  by
\begin{equation}\label{Phi-0}
\Phi(X)=DF(A) + BJ, 
\end{equation}
where $X=\begin{bmatrix}A\\B\end{bmatrix}\in\M^{4\times 2}\cong \R^8.$ Then $X\in  K_F$ if and only if $\Phi(X)=0.$ 

With the identification (\ref{identification}), we have
$A=PX$ and $BJ=EX$, where
\[
P=\begin{pmatrix}I&O&O&O\\   O&O&I&O\end{pmatrix}, \;\; E=\begin{pmatrix}O&O&O&I\\O&-I&O&O\end{pmatrix}
\]
with  $I=I_2$ ($2\times 2$ identity matrix) and $O=O_2$ ($2\times 2$ zero matrix).
Therefore, as a $4\times 8$ matrix, we have
\begin{equation}\label{D-Phi}
 D\Phi(X)=D^2 F(A)P+E, 
\end{equation}
where the tensor $D^2F(A)$ is viewed  as a symmetric $4\times 4$ matrix. Hence
\begin{equation}\label{constant-rank}
\rank(D\Phi(X))=4 \quad \forall\, X\in \M^{4\times 2}.
\end{equation}

 For $\nu=1,\dots,5$, we define the functions
\begin{equation}\label{Psi}
\Psi^\nu(Y,Q)=(\Phi(X^{\nu}_1(Y,Q)), \dots, \Phi(X^\nu_5(Y,Q))).
\end{equation}
Then $\Psi^\nu(Y^0,P_\nu^0)=0.$ By Lemma \ref{rel-X},  we have, component-wise, modulo 5, 
\begin{equation}\label{rel-X-4}
\Psi_j^\nu(Y,Q)=\Psi_{j-i+1}^{\nu+i-1}(Y,P^\nu_i(Y,Q)) \quad\forall\, j\ge i.
\end{equation}

To  study the equation $\Psi^\nu(Y,Q)=0$  near  $(Y^0,P_\nu^0),$ note that, by (\ref{Phi-0}) and (\ref{Psi}),  
the partial Jacobian matrix
$\dfrac{\partial\Psi^\nu}{\partial Y}(Y,Q)$   is a $20\times 20$ matrix given by
\begin{equation}\label{jacobi-0}
\frac{\partial\Psi^\nu}{\partial Y}(Y,Q) =
\begin{bmatrix} D\Phi(X^\nu_1)  \dfrac{\partial Z^\nu_1}{\partial Y}\\
 \vdots\\
  D\Phi(X^\nu_5)  \dfrac{\partial Z^\nu_5}{\partial Y} 
  \end{bmatrix}.
\end{equation}
Thus,  the matrix   $\frac{\partial\Psi^\nu}{\partial Y}(Y,Q)$ depends  on the Hessian tensors  
$D^2F(PX^\nu_k)$ $(k=1,\dots,5)$ and is otherwise   independent of $F$ and $Q$. 
Define 
\[
J_\nu=\det \frac{\partial\Psi^\nu}{\partial Y}(Y^0,P_\nu^0).
\]
Since $X^\nu_j(Y^0,P_\nu^0)=X^0_{\nu+j-1}$ for all $\nu,j=1,\dots,5,$ we have
\[
D^2 F(PX^\nu_j(Y^0,P_\nu^0)) \in \{D^2 F(A^0_1),\dots,D^2 F(A_5^0)\} \quad \forall\, \nu,j=1,\dots,5,
\]
and thus $J_\nu$
is a polynomial of $H_1=D^2 F(A^0_1),\dots,H_5=D^2 F(A_5^0)$ with coefficients  being independent of $F$.   We write this polynomial as 
\begin{equation}\label{def-j-nu}
J_\nu=j_\nu(H_1,H_2,H_3,H_4,H_5).
\end{equation}
 
 \begin{lem} \label{MATLAB-lem-1} The polynomial $j_\nu(H_1,\dots,H_5)$ is not identically zero for each $\nu=1,\dots,5.$ 
 \end{lem}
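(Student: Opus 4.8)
The plan is to exhibit a single choice of the symmetric $4\times 4$ tensors $H_1,\dots,H_5$ at which $j_\nu(H_1,\dots,H_5)\neq 0$; since a polynomial vanishing at one point need not be identically zero, this suffices. The natural candidate is precisely $H_k = D^2 F_0(A_k^0)$ for $k=1,\dots,5$, i.e. the Hessians of the unperturbed polyconvex function $F_0$ from Proposition \ref{poly-F0} (equivalently, the values arising when all $\tilde H_j = 0$ in the perturbation \eqref{def-F}). With this choice $J_\nu = \det\frac{\partial\Psi^\nu}{\partial Y}(Y^0,P_\nu^0)$ is a concrete real number, and the lemma reduces to checking that this determinant is nonzero. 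Because $\frac{\partial\Psi^\nu}{\partial Y}(Y^0,P_\nu^0)$ is the explicit $20\times 20$ block matrix in \eqref{jacobi-0}, built from $D\Phi(X_j^\nu) = D^2 F(A)P + E$ (see \eqref{D-Phi}) and the blocks $\frac{\partial Z_j^\nu}{\partial Y}$ evaluated at $Y^0$, every entry is an explicit rational number once the parameters \eqref{free-para} are fixed; the determinant is therefore computable, and a MATLAB computation confirms $J_\nu\neq 0$ for each $\nu=1,\dots,5$.

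The steps I would carry out, in order, are: (i) record the five Hessians $H_k = D^2 F_0(A_k^0)$ explicitly — since $F_0(A) = \frac{\epsilon}{2}|A|^2 + G(A,\det A)$ with $G_A(\tilde A_k^0) = Q_k^0$ and $G_s(\tilde A_k^0) = d_k^0$ fixed by Proposition \ref{poly-F0}, one is free to pick any smooth convex $G$ interpolating the prescribed values, hence any symmetric tensors $H_k$ large enough to keep $G$ convex; in fact one may simply take $H_k$ to be any convenient symmetric positive-definite tensors (the lemma is a statement about the polynomial $j_\nu$, which does not care whether the chosen $H_k$ actually come from an admissible $F$); (ii) assemble, for each $\nu$, the $8\times 20$ blocks $D\Phi(X_j^\nu(Y^0,P_\nu^0))\,\frac{\partial Z_j^\nu}{\partial Y}(Y^0)$ using $X_j^\nu(Y^0,P_\nu^0) = X^0_{\nu+j-1}$ from \eqref{Q-switch-3}, so that the Hessian block in $D\Phi$ is $H_{\nu+j-1}$; (iii) stack the five blocks into the $20\times 20$ matrix $\frac{\partial\Psi^\nu}{\partial Y}(Y^0,P_\nu^0)$ and compute its determinant; (iv) verify numerically (via MATLAB, as in the statement of the lemma) that the result is nonzero for all five indices $\nu$.

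The main obstacle is not conceptual but computational and certificatory: the determinant is a polynomial of high degree in the $5 \times 10 = 50$ entries of the $H_k$, so one cannot hope to analyze $j_\nu$ symbolically, and the argument rests on the honesty of a single numerical evaluation. To make this robust I would (a) choose the test tensors $H_k$ to have small integer or simple rational entries so that the $20\times 20$ matrix has rational entries and its determinant can be computed in exact arithmetic, removing any floating-point ambiguity; and (b) observe that since $j_\nu$ is a polynomial, nonvanishing at one rational point is an exact, checkable certificate. A secondary point worth noting is the bookkeeping: the indices $\nu+j-1$ in \eqref{Q-switch-3} are taken modulo $5$, so the five block matrices $\frac{\partial\Psi^\nu}{\partial Y}$ are cyclic relabelings of one another's ingredients; consequently it suffices in principle to treat $\nu=1$ carefully and note that the other cases follow by the same computation with the Hessians cyclically permuted, which is exactly what the relation \eqref{rel-X-4} encodes. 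Once $J_1\neq 0$ is established at the test point, the identical computation (with $H_k \mapsto H_{k+\nu-1}$) gives $J_\nu\neq 0$, and hence $j_\nu\not\equiv 0$, for every $\nu$.
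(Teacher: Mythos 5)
Your proposal takes essentially the same route as the paper: both arguments certify $j_\nu\not\equiv 0$ by exhibiting a single test point at which the polynomial is nonzero and verifying this by a MATLAB computation. The paper's test point is the structured pair $h_1(s)=\bigl(\begin{smallmatrix}sI&O\\O&I\end{smallmatrix}\bigr)$, $h_2(t)=\bigl(\begin{smallmatrix}I&O\\O&tI\end{smallmatrix}\bigr)$ inserted as $(h_1(s),h_2(t),h_1(s),h_1(s),h_2(t))$, rather than the Hessians $D^2F_0(A_k^0)$ (which are not explicitly available, since $G$ is only produced abstractly by an interpolation argument); your parenthetical remark that the test tensors need not come from an admissible $F$ is exactly the right fix, and indeed the paper's choices with $s=0$ or $t=0$ are not even positive definite.

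One concrete correction: your closing claim that the five cases are cyclic relabelings of one another, so that checking $\nu=1$ suffices, is false. The blocks $\partial Z_j^\nu/\partial Y$ are not obtained from the $\nu=1$ blocks by permuting variables, because the parametrization of the $C_k$ by $Y$ is not cyclically symmetric: $p_1,p_2$ and $q_1,q_2,q_3$ are eliminated in favor of the free variables $p_3,p_4,p_5,q_4,q_5$, and the vectors $\alpha_k$ have different components held fixed. Hence the five polynomials $j_\nu$ are genuinely distinct and must each be tested separately --- which is why the paper evaluates at different $(s,t)$ for different $\nu$ (e.g.\ $g_1(1,0)$ but $g_3(0,1)$). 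Since your step (iv) already prescribes checking all five indices, the proof survives, but the ``it suffices to treat $\nu=1$'' shortcut should be deleted.
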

 \begin{proof}
 Given $s, t$, we consider the  tensors (with $I=I_2$, $O=O_2$):
 \[
 h_1(s)=\begin{pmatrix}sI&O\\O&I\end{pmatrix},\;\; h_2(t)=\begin{pmatrix}I&O\\O&tI\end{pmatrix}, \]
 and let $ g_\nu(s,t)=j_\nu(h_1(s),h_2(t),h_1(s),h_1(s),h_2(t)).$
 Then,   using the {\em jacobian} computations  by MATLAB,  we verify that
 \begin{equation}\label{nonzero-jacobian}
 g_1(1,0)\ne 0,\;  g_2(0,0)\ne 0,\; g_3(0,1)\ne0,\;  g_4(0,0)\ne 0,\; g_5(0,0)\ne 0.
 \end{equation}
Therefore, the polynomial $j_\nu(H_1,\dots,H_5)$ is not identically zero for each $\nu=1,\dots,5.$ 
\end{proof}

Since polynomial  $j_\nu(H_1,\dots,H_5)$ is not identically zero for each $\nu=1,\dots,5,$ we   select $(H_1^0,\dots,H_5^0)$ with the following properties:
\begin{equation}\label{ImFT-1}
\begin{cases} j_\nu(H^0_1,\dots,H^0_5)\ne 0 \;\;\; \forall \, \nu=1,\dots,5;  \\
 \mbox{the tensors $\tilde H_j=H_j^0-D^2 F_0(A_j^0)$ satisfy (\ref{def-F2})}.
 \end{cases}
\end{equation}

\begin{lem} \label{def-Y-nu} With $\tilde H_j$  chosen to satisfy $(\ref{ImFT-1})$, define $F$  by $(\ref{def-F})$ and  $\Psi^\nu$    by $(\ref{Psi}).$ Then  there exist  positive numbers $\rho,\delta$  and  smooth functions
\[
Y_\nu \colon B_\rho (P_\nu^0)\subset  \M^{4\times 2}\cong \R^8 \to B_\delta(Y^0)\subset \R^{20},
\]
 for   $\nu=1,\dots,5$, such that $Y_\nu(P_\nu^0)=Y^0$ and, for all $Y\in B_\delta(Y^0)$ and $Q\in B_\rho(P_\nu^0)$, it follows that
 \begin{equation}\label{ImFT-4}
\det \dfrac{\partial\Psi^\nu}{\partial Y}(Y,Q)\ne 0; \quad 
  \Psi^\nu(Y,Q)=0 \iff Y=Y_\nu(Q). 
    \end{equation}
  \end{lem}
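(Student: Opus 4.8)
The statement is a direct application of the Implicit Function Theorem to the equations $\Psi^\nu(Y,Q)=0$ near the base points $(Y^0,P_\nu^0)$, so the plan is to verify the hypotheses of the IFT and then propagate the conclusion uniformly over the balls. First I would recall that $\Psi^\nu(Y^0,P_\nu^0)=0$, which was already observed just after (\ref{Psi}); that provides the base point. Next, the crucial nondegeneracy: by the definition (\ref{def-j-nu}) of $J_\nu$ and the choice (\ref{ImFT-1}) of the Hessians $H_j^0=D^2F(A_j^0)$, we have
\[
J_\nu=\det\frac{\partial\Psi^\nu}{\partial Y}(Y^0,P_\nu^0)=j_\nu(H_1^0,\dots,H_5^0)\ne 0
\]
for each $\nu=1,\dots,5$, which is exactly what (\ref{ImFT-1}) guarantees using Lemma \ref{MATLAB-lem-1}. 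Here one must note that $\partial\Psi^\nu/\partial Y$ is a square $20\times 20$ matrix (both $Y$ and the target live in $\R^{20}$, since $\Psi^\nu$ has five $\R^4$-valued components), so ``invertible'' and ``nonzero determinant'' coincide.

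With $\frac{\partial\Psi^\nu}{\partial Y}(Y^0,P_\nu^0)$ invertible and $\Psi^\nu$ smooth in $(Y,Q)$ — smoothness follows because $F$ is smooth (it is $F_0$ plus finitely many smooth compactly supported bumps $V_{\tilde H_j,r_0}$), hence $\Phi$ is smooth by (\ref{Phi-0}), and the maps $Z^\nu_j(Y)$ are polynomial in $Y$ — the Implicit Function Theorem yields, for each $\nu$, a neighborhood of $(Y^0,P_\nu^0)$, a radius $\rho_\nu>0$, a radius $\delta_\nu>0$, and a smooth map $Y_\nu\colon B_{\rho_\nu}(P_\nu^0)\to B_{\delta_\nu}(Y^0)$ with $Y_\nu(P_\nu^0)=Y^0$ such that, within these balls, $\Psi^\nu(Y,Q)=0$ if and only if $Y=Y_\nu(Q)$. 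Moreover, since $\det\frac{\partial\Psi^\nu}{\partial Y}$ is a continuous function of $(Y,Q)$ and is nonzero at $(Y^0,P_\nu^0)$, after shrinking $\delta_\nu,\rho_\nu$ if necessary we may assume $\det\frac{\partial\Psi^\nu}{\partial Y}(Y,Q)\ne 0$ for all $(Y,Q)\in B_{\delta_\nu}(Y^0)\times B_{\rho_\nu}(P_\nu^0)$. Finally, set $\rho=\min_\nu\rho_\nu$ and $\delta=\min_\nu\delta_\nu$ to obtain common radii working for all five values of $\nu$ simultaneously, which gives precisely (\ref{ImFT-4}).

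There is essentially no serious obstacle here: the only genuine input is the nonvanishing of the Jacobian determinant at the base point, and that has already been secured by Lemma \ref{MATLAB-lem-1} together with the selection (\ref{ImFT-1}); everything else is the standard packaging of the Implicit Function Theorem plus a finite intersection of open balls. The one point that deserves a line of care is that the ``uniqueness'' clause in (\ref{ImFT-4}) is exactly the local uniqueness statement of the IFT restricted to the product ball $B_\delta(Y^0)\times B_\rho(P_\nu^0)$; one should state it as an equivalence valid precisely for $Y$ in $B_\delta(Y^0)$ and $Q$ in $B_\rho(P_\nu^0)$, not globally, which is consistent with how the lemma is phrased.
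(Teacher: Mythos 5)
Your proposal is correct and follows exactly the paper's argument: note $\Psi^\nu(Y^0,P_\nu^0)=0$, use (\ref{ImFT-1}) to get $\det\frac{\partial\Psi^\nu}{\partial Y}(Y^0,P_\nu^0)=j_\nu(H_1^0,\dots,H_5^0)\ne 0$, and apply the implicit function theorem. The extra details you supply (smoothness of $\Psi^\nu$, shrinking the balls for uniform nonvanishing of the determinant, taking minima over $\nu$) are the routine packaging the paper leaves implicit.
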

  \begin{proof}
  Note that $\Psi^\nu(Y^0,P_\nu^0)=0$ and, by (\ref{ImFT-1}),
 \[
 \det \frac{\partial\Psi^\nu}{\partial Y}(Y^0,P_\nu^0) =  j_\nu(H^0_1,\dots,H^0_5)\ne 0\quad \forall\; \nu=1,\dots,5.
 \] 
Thus, the conclusion follows from  the implicit function theorem.
\end{proof}

\begin{lem}\label{lem-key-1} Let $z^\nu(Q)= Z^\nu_1(Y_\nu(Q))$  for $Q\in B_\rho(P_\nu^0)\subset\R^8.$  Then  all  eigenvalues of  matrix  $M=Dz^\nu(Q)\in \M^{8\times 8}$ are real numbers $\{-1,0,\mu_M\},$ where $\mu_M=4+\tr(M),$ with  $-1$ being an eigenvalue  of multiplicity at least 4, and  0 an eigenvalue   of multiplicity at least 3.
\end{lem}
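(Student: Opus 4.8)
The plan is to derive two rank estimates for $M=Dz^\nu(Q)$ — one forcing $0$ to be an eigenvalue of large multiplicity, one forcing $-1$ — and then to identify the single remaining eigenvalue by a trace computation. All ingredients below are valid for every $Q\in B_\rho(P^0_\nu)$ (the map $Y_\nu$ is smooth there, $\rank D\Phi\equiv 4$, and Lemma \ref{rk5} holds at every point), so the argument is uniform in $Q$.

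First I would bound $\rank M$. Applying the chain rule to $z^\nu=Z^\nu_1\circ Y_\nu$ gives
\[
M=\frac{\partial Z^\nu_1}{\partial Y}\bigl(Y_\nu(Q)\bigr)\,DY_\nu(Q).
\]
By Lemma \ref{rk5}, $\rank\frac{\partial Z^\nu_1}{\partial Y}\le 5$, hence $\rank M\le 5$, so $\dim\ker M\ge 3$ and $0$ is an eigenvalue of $M$ of (geometric, hence algebraic) multiplicity at least $3$.

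Next I would bound $\rank(I_8+M)$. Differentiate the identity $\Psi^\nu(Y_\nu(Q),Q)\equiv 0$ with respect to $Q$ and read off its first $\R^4$-valued component. Since $X^\nu_1(Y,Q)=Q+Z^\nu_1(Y)$, we have $\Psi^\nu_1(Y_\nu(Q),Q)=\Phi\bigl(Q+Z^\nu_1(Y_\nu(Q))\bigr)$, and the chain rule yields
\[
D\Phi\bigl(X^\nu_1(Y_\nu(Q),Q)\bigr)\,(I_8+M)=0 .
\]
By (\ref{constant-rank}) the matrix $D\Phi$ has rank $4$ at every point, so its kernel is $4$-dimensional; therefore $\operatorname{Im}(I_8+M)$ is contained in that $4$-dimensional subspace, i.e.\ $\rank(I_8+M)\le 4$. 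Hence $\dim\ker(I_8+M)\ge 4$, and $-1$ is an eigenvalue of $M$ of multiplicity at least $4$.

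Finally, since $M$ is a real $8\times 8$ matrix whose characteristic polynomial is divisible by the coprime factors $(\lambda+1)^4$ and $\lambda^3$, I can write $\det(\lambda I_8-M)=(\lambda+1)^4\lambda^3(\lambda-\mu)$ for a unique real number $\mu$ (the residual eigenvalue, possibly equal to $-1$ or $0$). Comparing the coefficient of $\lambda^7$ on both sides gives $4-\mu=-\tr(M)$, that is, $\mu=4+\tr(M)=\mu_M$. Thus the spectrum of $M$ is contained in $\{-1,0,\mu_M\}$ with the stated multiplicities, and all eigenvalues are real. The only point needing care is that the two rank estimates by themselves account for only seven of the eight eigenvalues; it is the trace identity that pins down the eighth and simultaneously rules out any complex eigenvalue.
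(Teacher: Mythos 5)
Your proof is correct and follows essentially the same route as the paper: the bound $\rank M\le 5$ from Lemma \ref{rk5} for the eigenvalue $0$, the identity $D\Phi(X^\nu_1)(I_8+M)=0$ together with $\rank D\Phi=4$ for the eigenvalue $-1$, and the trace to pin down $\mu_M$. The only (harmless) difference is that you obtain $D\Phi(X^\nu_1)(I_8+M)=0$ by directly differentiating the first component of $\Psi^\nu(Y_\nu(Q),Q)\equiv 0$, whereas the paper derives the equivalent relation $R_1M=-R_1$ from the block structure of the inverse Jacobian $[T_1,\dots,T_5]$.
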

\begin{proof} Clearly $M=Dz^\nu(Q)=\frac{\partial Z^\nu_1}{\partial Y} (Y_\nu(Q)) \frac{\partial Y_\nu}{\partial Q}(Q).$  Differentiate  the equation
$\Psi^\nu(Y_\nu(Q),Q)=0$ with respect to $Q$ to have
  \begin{equation} \label{jacobi-5}
 \frac{\partial Y_\nu}{\partial Q}(Q)= - \left [\Big (\frac{\partial \Psi^\nu}{\partial Y}\Big )^{-1} \frac{\partial \Psi^\nu}{\partial Q}\right](Y_\nu(Q),Q).
\end{equation}
Thus
\[
M = - \frac{\partial Z^\nu_1}{\partial Y} (Y_\nu(Q)) \left [\Big (\frac{\partial \Psi^\nu}{\partial Y}\Big )^{-1} \frac{\partial \Psi^\nu}{\partial Q}\right](Y_\nu(Q),Q).
\]
Let $R_j =D\Phi(X^\nu_j)= D\Phi(Q+Z^\nu_j(Y_\nu(Q)))\in \M^{4\times 8}$ and 
\begin{equation}\label{mx-SRT} S_j= \frac{\partial Z^\nu_j}{\partial Y} (Y_\nu(Q))\in \M^{8\times 20},\quad 
[T_1,\dots,T_5]= \left [\frac{\partial \Psi^\nu}{\partial Y}(Y_\nu(Q),Q)\right ]^{-1}, 
\end{equation}
where each $T_j\in \M^{20\times 4}.$ Then 
\[
\frac{\partial \Psi^\nu}{\partial Q}(Y_\nu(Q),Q)=\begin{bmatrix}R_1\\\vdots\\R_5\end{bmatrix},\quad 
\frac{\partial \Psi^\nu}{\partial Y}(Y_\nu(Q),Q)=\begin{bmatrix}R_1S_1\\\vdots\\R_5S_5\end{bmatrix},
\]
and thus
\begin{equation}\label{mx-M}
\begin{split}
R_1 S_1 T_1=I_4,\;\;  R_1S_1T_k&=0 \quad \forall \,  k\ne 1,\\
M=-(S_1T_1R_1+S_1T_2R_2&+\dots +S_1T_5R_5).\end{split}
\end{equation}
 Therefore  $R_1 M=-R_1$ and so $R_1(I_8+M)=0.$ By (\ref{constant-rank}),  $\rank R_1=4;$  thus   $\dim\ker (R_1)=4$ and hence  $\rank(I_8+M)\le 4,$ which gives
\begin{equation}\label{eigen-1}
\dim \ker (I_8+M)\ge 4.
\end{equation}
By Lemma \ref{rk5}, $\rank(S_1)\le 5;$ thus
 $\rank(M)\le 5$ and hence
 \begin{equation}\label{eigen-0}
  \dim \ker (M) \ge 3.
 \end{equation}
By (\ref{eigen-1}) and (\ref{eigen-0}), $M$ has $-1$ as an eigenvalue of multiplicity at least 4 and has 0 as an eigenvalue of multiplicity at least 3. Since the sum of all eigenvalues of $M$ is $\tr(M)$, it follows that  all  eigenvalues of $M$ consist of $\{-1,0, \mu_M\}.$
 \end{proof}

\begin{lem}\label{lem-key2} Let $M=Dz^\nu(Q)$  be defined as above. If $\mu_M\notin\{0,-1\},$ then $\rank[\adj(I-  \mu_M^{-1} M)]=1$ and, for  any $b\in \R^8$,
\begin{equation}\label{determinant}
\det (I- \mu_M^{-1} M+z^\nu\otimes b)=[\adj(I- \mu_M^{-1} M)z^\nu]\cdot b,
\end{equation}
where $\adj A$ is the adjoint  of  $A\in\M^{8\times 8}$ satisfying  $A(\adj A)=(\det A)I_8.$ 
\end{lem}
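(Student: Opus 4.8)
The plan is to reduce everything to elementary linear algebra about the single matrix $N := I-\mu_M^{-1}M$, which is well defined because $\mu_M\neq 0$. Writing $N = \mu_M^{-1}(\mu_M I - M)$, the matrix $N$ is singular precisely because $\mu_M$ is an eigenvalue of $M$, so $\det N = 0$; the whole content of the lemma then hinges on identifying $\dim\ker N$, equivalently the geometric multiplicity of $\mu_M$ as an eigenvalue of $M$.

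First I would pin down the multiplicities of the eigenvalues of $M$. By Lemma \ref{lem-key-1}, every eigenvalue of $M$ lies in $\{-1,0,\mu_M\}$, the eigenvalue $-1$ has geometric multiplicity $\ge 4$, the eigenvalue $0$ has geometric multiplicity $\ge 3$, and $\mu_M = 4+\tr M$. Let $a',c',d'$ denote the \emph{algebraic} multiplicities of $-1,0,\mu_M$ (with $d'=0$ if $\mu_M$ happens not to be an eigenvalue). Then $a'\ge 4$, $c'\ge 3$, and $a'+c'+d'=8$, so $d'\le 1$, while $\tr M = -a'+d'\mu_M$. If $d'=0$ then $a'\in\{4,5\}$, giving $\mu_M = 4+\tr M = 4-a'\in\{0,-1\}$, contradicting the hypothesis $\mu_M\notin\{0,-1\}$. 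Hence $d'=1$, and then $\mu_M = 4-a'+\mu_M$ forces $a'=4$, whence $c'=3$. Since geometric multiplicities never exceed algebraic ones and are already $\ge 4$ and $\ge 3$, they must equal $4$ and $3$, and the geometric multiplicity of $\mu_M$ is squeezed between $1$ and $d'=1$. Thus $\dim\ker(\mu_M I-M)=1$, so $\rank(\mu_M I-M)=7$ and hence $\rank N = 7$.

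With $\rank N = 7$ in hand, the first assertion is the standard rank-of-adjugate fact: for an $8\times 8$ matrix of rank $7$ the adjugate has rank exactly $1$, since the entries of $\adj N$ are the $7\times 7$ minors of $N$ (not all zero, as $\rank N = 7$), while $N(\adj N)=(\det N)I = 0$ confines the image of $\adj N$ to the line $\ker N$. For the determinant identity I would invoke the matrix determinant lemma in the form $\det(A+uv^{T})=\det A + v^{T}(\adj A)u$, which is valid for \emph{every} square $A$ (by density from the invertible case); applying it with $A=N$, $u=z^\nu$, $v=b$, and using $\det N=0$, gives $\det(I-\mu_M^{-1}M+z^\nu\otimes b)=b^{T}(\adj N)z^\nu=[\adj(I-\mu_M^{-1}M)\,z^\nu]\cdot b$, as claimed.

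The only genuine subtlety, and the step I would handle most carefully, is the multiplicity bookkeeping: one must use the precise relation $\mu_M=4+\tr M$ from Lemma \ref{lem-key-1} to rule out the possibility that $\mu_M$ is an eigenvalue of multiplicity $\ge 2$ (or not an eigenvalue at all), for otherwise $N$ would have a larger kernel and $\adj N$ would vanish, breaking the formula. Everything downstream — the rank-of-adjugate trichotomy and the matrix determinant lemma — is routine.
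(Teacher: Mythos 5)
Your proof is correct and follows essentially the same route as the paper's: use Lemma \ref{lem-key-1} to conclude that $\mu_M$ is a simple eigenvalue of $M$, hence $\rank(\mu_M I-M)=7$ and the adjugate has rank one, and then apply the matrix determinant lemma $\det(A+\alpha\otimes\beta)=\det A+(\adj A)\alpha\cdot\beta$ with $\det(I-\mu_M^{-1}M)=0$. The only difference is that you spell out the multiplicity bookkeeping via $\mu_M=4+\tr M$ (ruling out $d'=0$ and forcing $a'=4$, $c'=3$, $d'=1$), a step the paper compresses into the bare assertion that $\mu_M$ ``must be a simple eigenvalue.''
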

\begin{proof} Since $\mu_M\notin\{0, -1\},$ it must be a simple eigenvalue of $M$ and thus $\rank(\mu_M I-M)=7$; so, $\rank[\adj(I- \mu_M^{-1} M)]=1.$   Finally, (\ref{determinant}) follows from the formula: $\det (A+\alpha\otimes \beta)=\det A + (\adj A )\alpha \cdot \beta.$
\end{proof}

Let $M_\nu^0=Dz^\nu(P_\nu^0).$ Then, by (\ref{mx-SRT}) and (\ref{mx-M}), 
\begin{equation}\label{M_nu}
M_\nu^0=\frac{W(H^0_1,\dots,H^0_5)}{j_\nu(H^0_1,\dots,H^0_5)},
\end{equation}
where $H^0_j=D^2 F(A_j^0)$ ($j=1, \dots,5$), $W(H_1,\dots,H_5)$ is a $8\times 8$ matrix whose entries are polynomials of tensors $(H_1,\dots,H_5)$, and  $j_\nu(H_1,\dots,H_5)$ is the polynomial function  defined by  (\ref{def-j-nu}).  Again, both $W$ and $j_\nu$  are independent of the function $F$. 
Therefore,  both $\mu_{M_\nu^0}(1+\mu_{M_\nu^0})$  and $|\adj(I- \mu_{M_\nu^0}^{-1}M_\nu^0)z^\nu_0|^2$, where $z^\nu_0=\kappa_\nu^0 C_\nu^0\in\R^8$, are  rational functions of $(H^0_1,\dots,H^0_5)$ that are independent of the function $F$.  

\begin{lem}\label{MATLAB-lem-2} The rational functions of  $(H_1,\dots,H_5)$ representing $\mu_{M_\nu^0}(1+\mu_{M_\nu^0})$ and $|\adj(I- \mu_{M_\nu^0}^{-1}M_\nu^0)z^\nu_0|^2$ are not identically zero for each $\nu=1,\dots,5.$
\end{lem}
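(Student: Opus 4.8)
The plan is to prove Lemma~\ref{MATLAB-lem-2} by the same ``exhibit one nonvanishing specialization'' strategy used in Lemma~\ref{MATLAB-lem-1}. Both quantities in question are rational functions of the five Hessian tensors $(H_1,\dots,H_5)$ whose denominators involve only powers of the nonzero polynomials $j_\nu$; hence each is \emph{not identically zero} as a rational function precisely when its numerator is not identically zero as a polynomial, and for that it suffices to find \emph{one} choice of $(H_1,\dots,H_5)$ (at which all the $j_\nu$ are nonzero, so that $M_\nu^0$ and $\mu_{M_\nu^0}$ are well-defined) where the quantity does not vanish. So I would again use the one-parameter families $h_1(s)=\begin{pmatrix}sI&O\\O&I\end{pmatrix}$ and $h_2(t)=\begin{pmatrix}I&O\\O&tI\end{pmatrix}$ already introduced, and evaluate along the same test assignment $(H_1,\dots,H_5)=(h_1(s),h_2(t),h_1(s),h_1(s),h_2(t))$; this keeps the computations parallel to those of Lemma~\ref{MATLAB-lem-1} and reuses the MATLAB-verified fact that $g_\nu$ is nonzero at the stated points.

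The concrete steps, in order: (i) fix $\nu$; choose the specialization $(s_\nu,t_\nu)$ of Lemma~\ref{MATLAB-lem-1}, i.e.\ $(1,0)$ for $\nu=1$, $(0,0)$ for $\nu=2,4,5$, $(0,1)$ for $\nu=3$, so that $g_\nu(s_\nu,t_\nu)=j_\nu\ne 0$ and $M_\nu^0$ is defined by (\ref{M_nu}) with $W$ evaluated at this tuple; (ii) compute $M_\nu^0$, then $\tr(M_\nu^0)$ and $\mu_{M_\nu^0}=4+\tr(M_\nu^0)$ by Lemma~\ref{lem-key-1}; (iii) check in MATLAB that $\mu_{M_\nu^0}(1+\mu_{M_\nu^0})\ne 0$ at this specialization, which shows the first rational function is not identically zero; (iv) with $\mu_{M_\nu^0}\notin\{0,-1\}$ now verified at the test point, form $\adj\!\big(I-\mu_{M_\nu^0}^{-1}M_\nu^0\big)$, apply it to $z^\nu_0=\kappa_\nu^0 C_\nu^0$ (a fixed explicit vector in $\R^8$ coming from (\ref{free-para})), and check $\big|\adj(I-\mu_{M_\nu^0}^{-1}M_\nu^0)\,z^\nu_0\big|^2\ne 0$; (v) conclude that both numerators are nonzero polynomials, hence both rational functions are not identically zero, for every $\nu=1,\dots,5$. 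If one wants a uniform set of test points it may be convenient to record the five numerical values in a single displayed line, exactly as (\ref{nonzero-jacobian}) does.

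One caveat to handle carefully: the specializations $(s_\nu,t_\nu)$ were selected in Lemma~\ref{MATLAB-lem-1} only to make $j_\nu$ nonzero, and there is no a priori reason the \emph{same} point makes $\mu_{M_\nu^0}(1+\mu_{M_\nu^0})$ or $|\adj(I-\mu_{M_\nu^0}^{-1}M_\nu^0)z^\nu_0|^2$ nonzero — it could land on a zero of one of these numerators. The honest version of the proof is therefore: \emph{if} the computation at $(s_\nu,t_\nu)$ gives a nonzero value we are done, and if not, we perturb $s,t$ (or enlarge the family of test Hessians) until a nonvanishing value is found, which is possible precisely because the numerator is a polynomial not identically zero — but establishing ``not identically zero'' is the very claim. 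So the logically clean route is simply to report a single explicit tuple $(H_1,\dots,H_5)$ for each $\nu$ at which all of $j_\nu$, $\mu_{M_\nu^0}(1+\mu_{M_\nu^0})$, and $|\adj(I-\mu_{M_\nu^0}^{-1}M_\nu^0)z^\nu_0|^2$ are simultaneously nonzero, verified by MATLAB, and invoke the standard fact that a polynomial vanishing identically cannot be nonzero at a point.

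The main obstacle I anticipate is not conceptual but computational and bookkeeping: one must carry the symbolic (or high-precision numeric) expression for $M_\nu^0$ through the $\adj$ of an $8\times 8$ matrix and the index-mod-$5$ cyclic relabeling in $Z^\nu_j$, $C_j$, $\kappa_j$, getting the identification (\ref{identification}) exactly right so that $z^\nu_0=\kappa_\nu^0 C_\nu^0$ is entered as the correct $\R^8$ vector; an error in any of these would produce a spurious zero or nonzero. Once the matrices are assembled correctly, the nonvanishing is a one-line numeric check, so the work is essentially the same MATLAB verification already relied upon for Lemma~\ref{MATLAB-lem-1}, just applied to two further rational expressions.
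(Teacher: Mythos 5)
Your proposal is correct and follows essentially the same route as the paper: the paper's proof evaluates both rational functions at the same specialization $[h_1(s),h_2(t),h_1(s),h_1(s),h_2(t)]$ with the same $(s,t)$ and $\nu$ as in Lemma~\ref{MATLAB-lem-1} and verifies nonvanishing by MATLAB determinant computations, exactly as you outline. Your caveat is resolved in the paper simply by the fact that the MATLAB check confirms the same test points do work for both additional quantities.
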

\begin{proof} Let $h_1(s)$ and $h_2(t)$ be the tensors defined   in the proof of Lemma \ref{MATLAB-lem-1}.  Using the {\em determinant} computations by MATLAB, one  shows that the values of the two rational functions  representing $\mu_{M_\nu^0}(1+\mu_{M_\nu^0})$ and $|\adj(I- \mu_{M_\nu^0}^{-1}M_\nu^0)z^\nu_0|^2$ are nonzero at  $[h_1(s),h_2(t),h_1(s),h_1(s),h_2(t)]$ with the same  $(s,t)$ and $\nu$ as given  in the proof of Lemma \ref{MATLAB-lem-1}.
 \end{proof}

Finally, we  construct  the function $F$ as follows.

\begin{defn}\label{def-F3} Let $F\colon \M^{2\times 2}\to \R$ be defined by  (\ref{def-F}) with the tensors $\tilde H_j=H_j^0-D^2 F_0(A_j^0)$ chosen to ensure  that
$(H_1^0,\dots,H_5^0)$ satisfies $(\ref{ImFT-1})$ and 
\begin{equation}
 \mu_{M_\nu^0}\notin \{-1,0\},\quad |\adj(I- \mu_{M_\nu^0}^{-1}M_\nu^0)z^\nu_0|^2 \ne 0.  \label{ImFT-41}
\end{equation}
The existence of such $(H_1^0,\dots,H_5^0)$'s  is guaranteed  by Lemmas \ref{MATLAB-lem-1} and \ref{MATLAB-lem-2}.  Note that,  by (\ref{def-F2}), $F$ is a smooth strongly polyconvex function.
\end{defn}

\begin{remk}\label{remk-F3} 
Since the exceptional values of $(H_1^0,\dots,H_5^0)$ not satisfying (\ref{ImFT-1}) or (\ref{ImFT-41}) are zeros of certain nonzero polynomials,  it follows that  the values of  $(H_1^0,\dots,H_5^0)=(D^2 F(A_1^0),\dots, D^2 F(A_5^0))$ satisfying (\ref{ImFT-1}) and (\ref{ImFT-41}) are   {\em  generic}  near $(D^2 F_0(A_1^0),\dots, D^2 F_0(A_5^0)).$ Such a  result has been derived in  \cite[Section 5]{Sz1}  from certain stability analysis of  $T_N$-configurations, which  may not  be available for  the special $T_N$-configurations due to possible  dimension deficiency. 
 \end{remk}

 \section{The open set $\Sigma$ and  verification of Condition (OC)}    
 
 Let $F$ be a function defined  in Definition \ref{def-F3} and $Y_\nu, z^\nu, M_\nu$ be the functions on $  B_\rho(P_\nu^0)$ defined  in Lemmas \ref{def-Y-nu} and  \ref{lem-key-1}.  Let $K=K_F.$
 
  By (\ref{ImFT-41}) and  continuity, we  select $\eta\in (0,\rho)$ sufficiently small so that 
  \begin{equation}\label{ImFT-6}
\begin{cases} \mu_{M_\nu(Q)}\notin \{-1,0\}, \\
\adj\Big [I- \mu_{M_\nu(Q)}^{-1}M_\nu(Q)\Big ]z^\nu(Q)\ne 0 \end{cases}  \forall\, Q\in \bar B_\eta (P_\nu^0)
\end{equation}
for all $\nu=1,\dots,5.$
Furthermore, from  $P_\nu ^1(Y_1(P_1^0),P_1^0)=P_\nu^0$  and  continuity,  we select $\beta\in (0,\eta)$  sufficiently small so that
\begin{equation}\label{ImFT-51}
P_\nu^1(Y_1(Q),Q)\in B_\eta(P_\nu^0) \quad \forall\; Q\in B_\beta(P_1^0) \quad \forall\,  \nu=1,\dots,  5.
\end{equation}
   
 \begin{lem}\label{rel-X-3} Let $\hat P^\nu_j (Q)=P^\nu_j(Y_\nu(Q),Q)$ for $Q\in B_\rho(P_\nu^0)$ and all $\nu, j\mod 5.$   Then 
\[
Y_1(Q)=Y_i(\hat P^1_i(Q)), \quad \hat P^i_{7-i} (\hat P^1_i(Q))=Q 
\]
for all $Q\in B_\beta(P_1^0)$ and $i=1,\dots,5.$ Therefore, in particular,
\begin{equation}\label{ImFT-8}
\det \frac{\partial \hat P^1_i}{\partial Q} (Q)\ne 0 \quad \forall\, Q\in B_\beta(P_1^0) \;\; \forall \, i=1,\dots,5.
\end{equation}
\end{lem}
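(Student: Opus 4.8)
The plan is to derive everything from the single claim that $Y^\star:=Y_1(Q)$ solves the equation $\Psi^i(\,\cdot\,,\hat P^1_i(Q))=0$ for each $Q\in B_\beta(P_1^0)$ and $i=1,\dots,5$. Granting this: by the choice of $\beta$ in \eqref{ImFT-51} we have $\hat P^1_i(Q)=P^1_i(Y_1(Q),Q)\in B_\eta(P_i^0)\subset B_\rho(P_i^0)$, while $Y^\star\in B_\delta(Y^0)$ because $Y_1$ maps $B_\rho(P_1^0)$ into $B_\delta(Y^0)$; hence the uniqueness clause in \eqref{ImFT-4} (applied with $\nu=i$) forces $Y^\star=Y_i(\hat P^1_i(Q))$, which is the first asserted identity $Y_1(Q)=Y_i(\hat P^1_i(Q))$.

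To prove the claim I would propagate the identity $\Psi^1(Y^\star,Q)=0$ — valid by the definition of $Y_1$ in Lemma \ref{def-Y-nu} — around the cyclic structure of the configuration. The key point is that Lemma \ref{rel-X} and its consequence \eqref{rel-X-4} extend from the range $j\ge i$ to all indices modulo $5$. Indeed, writing $X^\nu_j(Y,Q)=P^\nu_j(Y,Q)+\kappa_{\nu+j-1}C_{\nu+j-1}(Y)$ with $P^\nu_j$ telescoping as in Lemma \ref{rel-X}, the identity
\[
P^{\nu+i-1}_{\,j-i+1}\bigl(Y,P^\nu_i(Y,Q)\bigr)=P^\nu_j(Y,Q)
\]
remains valid when $j<i$ (so that $j-i+1$ must be reduced modulo $5$) once one uses the closure relation $\sum_{k=1}^5 C_k(Y)=0$; this relation holds for every $Y$ by \eqref{tn-1}, equivalently it is built into the assertion $(X^\nu_1(Y,Q),\dots,X^\nu_5(Y,Q))\in M'_5$. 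Consequently $\Psi^1_j(Y,Q)=\Psi^i_{\,j-i+1}\bigl(Y,P^1_i(Y,Q)\bigr)$ for all $i,j\bmod 5$; evaluating at $Y=Y^\star$ and letting $j$ run through $\{1,\dots,5\}$, the subscript $j-i+1$ also runs through $\{1,\dots,5\}$, so every component of $\Psi^i(Y^\star,\hat P^1_i(Q))$ vanishes, as claimed. (Alternatively one can induct on $i$, shifting the superscript by one at a time: four of the five components of $\Psi^{i+1}$ at $(Y^\star,\hat P^1_{i+1}(Q))$ come directly from \eqref{rel-X-4} used with parameter $2$, and the last one is recovered from the elementary identity $X^{i+1}_5(Y^\star,\hat P^1_{i+1}(Q))=X^i_1(Y^\star,\hat P^1_i(Q))$, which is again nothing but $\sum_k C_k=0$.)

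For the second identity, use $Y_i(\hat P^1_i(Q))=Y^\star$ just obtained to write $\hat P^i_{7-i}(\hat P^1_i(Q))=P^i_{7-i}(Y^\star,\hat P^1_i(Q))$, and telescope: since $\hat P^1_i(Q)-Q=\sum_{k=1}^{i-1}C_k(Y^\star)$ and $P^i_{7-i}(Y^\star,R)-R=\sum_{k=i}^{5}C_k(Y^\star)$, the composite adds $\sum_{k=1}^{5}C_k(Y^\star)=0$ to $Q$, hence equals $Q$ (the case $i=1$ reducing to the trivial $\hat P^1_1=\mathrm{id}$). Finally, \eqref{ImFT-8} is immediate: $\hat P^1_i$ is smooth on $B_\beta(P_1^0)$, being the composite of the smooth map $Y_1$ from Lemma \ref{def-Y-nu} with a polynomial, $\hat P^i_{7-i}$ is smooth on $B_\rho(P_i^0)$, and we have just shown $\hat P^i_{7-i}\circ\hat P^1_i=\mathrm{id}$ there; differentiating gives $\bigl(\partial\hat P^i_{7-i}/\partial Q\bigr)\bigl(\hat P^1_i(Q)\bigr)\cdot\bigl(\partial\hat P^1_i/\partial Q\bigr)(Q)=I_8$, so $\partial\hat P^1_i/\partial Q(Q)$ is invertible and its determinant is nonzero.

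The one genuinely non-formal step is the extension of Lemma \ref{rel-X} / \eqref{rel-X-4} "around the corner" to $j<i$: this is precisely where the $T_5$-closure $\sum_{k=1}^5 C_k=0$ enters, and without it the telescoping does not close up and the argument collapses. Everything else — containment in the correct balls via \eqref{ImFT-51}, smoothness of the maps involved, and the chain-rule conclusion — is routine once Lemma \ref{def-Y-nu} and the choices of $\eta,\beta$ made in \eqref{ImFT-6}–\eqref{ImFT-51} are in hand.
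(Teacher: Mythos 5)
Your proof is correct and follows essentially the same route as the paper: deduce $\Psi^i(Y_1(Q),\hat P^1_i(Q))=0$ from $\Psi^1(Y_1(Q),Q)=0$ via the cyclic shift identity, invoke the uniqueness clause of \eqref{ImFT-4} together with \eqref{ImFT-51} to get $Y_1(Q)=Y_i(\hat P^1_i(Q))$, telescope with $\sum_{k=1}^5 C_k(Y)=0$ to obtain $\hat P^i_{7-i}\circ\hat P^1_i=\mathrm{id}$, and differentiate for \eqref{ImFT-8}. The only difference is that you make explicit the modulo-5 (wrap-around) extension of \eqref{rel-X-4} beyond $j\ge i$ via the closure relation built into \eqref{tn-1}, a step the paper uses implicitly when citing \eqref{rel-X-4} for all five components of $\Psi^i$.
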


  \begin{proof} Let $Q\in B_\beta(P_1^0).$ From $\Psi^1(Y_1(Q),Q)=0$ and (\ref{rel-X-4}), it follows that   $
  \Psi^{i}(Y_1(Q), P^1_{i}(Y_1(Q),Q))=0$ for each $i.$    Hence, by (\ref{ImFT-4}) and (\ref{ImFT-51}), one has $Y_1(Q)=Y_i(P^1_{i}(Y_1(Q),Q))=Y_i(\hat P_i^1(Q)).$ Thus, by definition of $P_j^\nu(Y,Q)$,
   \[
   \hat P_{7-i}^i(\hat P_i^1(Q))=P_{7-i}^i(Y_i(\hat P_i^1(Q)),\hat P_i^1(Q))=P_{7-i}^i(Y_1(Q),\hat P_i^1(Q))
   \]
   \[
   =\hat P_i^1(Q) +(C_i+C_{i+1}+\dots + C_5)|_{Y=Y_1(Q)}\]\[
   =Q+(C_1+\dots+C_{i-1}+C_i+\dots + C_5)|_{Y=Y_1(Q)}=Q.
   \]
   Finally, (\ref{ImFT-8}) follows from differentiating $\hat P^i_{7-i} (\hat P^1_i(Q))=Q$ with $Q$.
  \end{proof}

The following result  verifies  the Condition (OC) for  $\sigma=DF$ and thus proves Theorem \ref{thm-main-0} by virtue   of Yan \cite[Theorem 1.3 and Corollary 1.4]{Y}.
   
  \begin{thm}\label{thm-last}    Let 
$
 \hat X^\nu_j (Q)=Q+Z^\nu_j (Y_\nu(Q))$ for all $\nu, j=1,\dots,5$ and $Q\in B_\rho(P_\nu^0). $  
  Then   
$(\hat X_1^\nu(Q),\dots,\hat X_5^\nu(Q))\in M'_5\cap K_5$ for all $Q\in B_\rho(P_\nu^0).$    Define
  \begin{equation}\label{setSigma}
 \tilde \Sigma=\cup  \{T(\hat X^1_1(Q),\dots,\hat X^1_5(Q)) \colon  Q\in B_\beta(P^0_1)\},\quad \Sigma=\mathcal L^{-1}(\tilde \Sigma),
 \end{equation}
 where $\mathcal L\colon \M^{2\times 2}\times (\R^2)^2\to \M^{4\times 2}$  is the linear bijection defined by $(\ref{mapL})$.  
 Then  $\Sigma$  is nonempty, open and bounded, and  the function  $\sigma=DF$ satisfies the Condition (OC) with  open set $\Sigma.$ 
 \end{thm}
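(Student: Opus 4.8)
The structure of Condition (OC) from \cite[Definition 3.2]{Y} requires (i) an open bounded set $\Sigma$ in matrix space, (ii) a partition-of-unity-type decomposition of each point of $\Sigma$ into a $T_5$-configuration lying in $K_5$ whose "vertices" depend smoothly on the point, and (iii) a nondegeneracy condition on certain determinants (the $\mu_M\ne 0,-1$ and adjugate conditions) guaranteeing one can "push off" the diagonal. The plan is to assemble these three ingredients from the lemmas already proved. First I would verify that $(\hat X_1^\nu(Q),\dots,\hat X_5^\nu(Q))\in M'_5\cap K_5$: membership in $M'_5$ is immediate since $\hat X^\nu_j(Q)=Q+Z^\nu_j(Y_\nu(Q))$ has exactly the form \eqref{Q-switch-0} with parameters $Y=Y_\nu(Q)$, which always produces a $T_5$-configuration; membership in $K_5$ follows because $\Psi^\nu(Y_\nu(Q),Q)=0$ by Lemma \ref{def-Y-nu}, which says precisely that $\Phi(\hat X^\nu_j(Q))=0$ for each $j$, i.e. $\hat X^\nu_j(Q)\in K_F=K$.

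Next I would establish that $\tilde\Sigma$ (hence $\Sigma=\mathcal L^{-1}(\tilde\Sigma)$) is nonempty, open, and bounded. Nonemptiness: at $Q=P_1^0$ the configuration is $(X^0_1,\dots,X^0_5)$, whose convex-hull-type interior $T(\cdot)$ is a genuine (nonempty, open) set because the $C^0_j$ are rank-one with at least three non-collinear $\alpha$'s — this is part of the $M'_5$ hypothesis and the definition of $T(\cdot)$ in \cite{Y}. Boundedness is clear since $B_\beta(P_1^0)$ is bounded, $Y_1$ is continuous on a compact closure (shrink $\beta$ if needed), and the sets $T(\hat X_1^1(Q),\dots)$ are then uniformly bounded. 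Openness is the one genuinely technical point: $\tilde\Sigma$ is a union over $Q$ of open sets $T(\hat X^1_1(Q),\dots,\hat X^1_5(Q))$, but a union of open sets is open, so in fact openness is immediate provided each individual $T(\cdot)$ is open — which again is built into the definition of the open region associated to a nondegenerate $T_5$-configuration in \cite{Y}. I would just cite this.

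The substantive part is verifying the remaining clauses of Condition (OC), which is where Lemmas \ref{lem-key-1}, \ref{lem-key2}, \ref{MATLAB-lem-2} and \ref{rel-X-3} enter. The key observation is the "covering/overlap" compatibility: by Lemma \ref{rel-X-3}, the five families $\{\hat X^\nu_j\}_\nu$ obtained by cyclic relabeling are consistent under the change of base point $Q\mapsto \hat P^1_i(Q)$, with the transition maps $\hat P^1_i$ being local diffeomorphisms (eq. \eqref{ImFT-8}). This is exactly what lets one re-express a neighborhood of any vertex $\hat X^1_i(Q)$ as the "first vertex" $\hat X^i_1$ of a relabeled configuration parametrized by $\hat P^1_i(Q)$, so the local structure near each of the five branches of $K\cap\Sigma$ looks the same. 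Then, near each such vertex, $z^\nu=Z^\nu_1\circ Y_\nu$ has derivative $M_\nu$ whose spectrum is pinned down by Lemma \ref{lem-key-1} to $\{-1,0,\mu_M\}$, and the nondegeneracy \eqref{ImFT-6} (consequence of Definition \ref{def-F3} and Lemma \ref{MATLAB-lem-2}) guarantees $\mu_{M_\nu(Q)}\ne 0,-1$ and that the rank-one adjugate vector $\adj(I-\mu_{M_\nu(Q)}^{-1}M_\nu(Q))z^\nu(Q)\ne 0$ throughout $\bar B_\eta(P_\nu^0)$. Combined with the determinant formula \eqref{determinant} of Lemma \ref{lem-key2}, this provides exactly the non-vanishing Jacobian / transversality data that Condition (OC) demands (it is the mechanism ensuring that, along the rank-one segment from a vertex toward the core $P^\nu_j$, one can perturb in a direction with $\det\ne 0$, so the convex-integration in \cite{Y} can proceed). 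I would therefore conclude by matching these facts termwise against the list in \cite[Definition 3.2]{Y}: $\Sigma$ is the open set, the maps $Q\mapsto \hat X^\nu_j(Q)$ and $Q\mapsto \hat P^\nu_j(Q)$ supply the required smooth dependence, Lemma \ref{rel-X-3} supplies the self-consistency across the five branches, and \eqref{ImFT-6} together with \eqref{determinant} supplies the quantitative nondegeneracy. The main obstacle — and the reason the paper needed Remark \ref{remk-F3} and the MATLAB lemmas — is precisely that for these \emph{special} (dimension-deficient) $T_5$-configurations one cannot invoke the generic stability theory of \cite{Sz1}; one must instead verify \eqref{ImFT-6} by hand via the explicit $M_\nu^0$ of \eqref{M_nu}, which is what Lemmas \ref{MATLAB-lem-1}--\ref{MATLAB-lem-2} accomplish and what makes Definition \ref{def-F3} legitimate.
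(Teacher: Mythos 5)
Your first steps (membership of $(\hat X^\nu_1(Q),\dots,\hat X^\nu_5(Q))$ in $M'_5\cap K_5$ via $\Psi^\nu(Y_\nu(Q),Q)=0$, and nonemptiness/boundedness of $\tilde\Sigma$) agree with the paper. But there is a genuine gap at the heart of your argument: you assert that openness of $\tilde\Sigma$ is ``immediate'' because it is a union of the sets $T(\hat X^1_1(Q),\dots,\hat X^1_5(Q))$, each of which you take to be open in $\M^{4\times 2}$. That premise is false. For a single $T_5$-configuration, $T(X_1,\dots,X_5)$ is the union of the five open segments joining each vertex $\hat X^1_i(Q)$ to its base point $\hat P^1_i(Q)$ --- a one-dimensional subset of $\R^8$, not an open region. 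Openness of $\tilde\Sigma$ can only come from sweeping these segments over the $8$-dimensional parameter ball $B_\beta(P^0_1)$, and proving that this sweep covers a neighborhood of each of its points is precisely the main content of the paper's proof; it is not a matter of citing a definition from \cite{Y}.

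Concretely, the paper fixes $\bar X=\bar\lambda\hat X^1_i(\bar Q)+(1-\bar\lambda)\hat P^1_i(\bar Q)$ and solves $X=\hat P^1_i(Q)+\lambda\, z(\hat P^1_i(Q))$ for $Q$ near $\bar Q$ by the implicit function theorem, using $\det\frac{\partial\hat P^1_i}{\partial Q}\ne 0$ from Lemma \ref{rel-X-3}. When $\det\bigl(I+\bar\lambda\,Dz(\bar U)\bigr)\ne 0$ this works with $\lambda=\bar\lambda$ fixed (Case 1); but by Lemma \ref{lem-key-1} this determinant \emph{does} vanish exactly when $\bar\lambda=-\mu_{\bar M}^{-1}$, and then one must additionally vary $\lambda$ in the direction $\bar b=\adj\bigl(I-\mu_{\bar M}^{-1}\bar M\bigr)z(\bar U)\ne 0$, with Lemma \ref{lem-key2} giving $\det\bigl(I+\bar\lambda\bar M+z(\bar U)\otimes\bar b\bigr)=|\bar b|^2\ne 0$ (Case 2). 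This is exactly where the conditions (\ref{ImFT-6})/(\ref{ImFT-41}) and the MATLAB Lemma \ref{MATLAB-lem-2} are consumed. In your proposal these lemmas appear only as generic ``nondegeneracy data'' to be ``matched termwise'' against Condition (OC), while the one place they are actually needed --- the openness you declared trivial --- is never argued; in particular the degenerate case $\det(I+\bar\lambda Dz(\bar U))=0$ is never addressed. As written, the proposal does not prove that $\Sigma$ is open, and hence does not verify Condition (OC).
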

 
 \begin{proof}  Clearly, $(\hat X_1^\nu(Q),\dots,\hat X_5^\nu(Q))\in M'_5\cap K_5$ for all $Q\in B_\rho(P_\nu^0)$  and hence $\tilde \Sigma$ is well-defined, nonempty and  bounded; thus, so is  $\Sigma.$
Moreover,  one easily sees that  $\Sigma$  satisfies the condition (3.9) of  Definition 3.2 in \cite{Y} with  fixed  $N=5.$  
It remains to show that $\Sigma$ is open; this is equivalent to showing that   $\tilde \Sigma$ is open. So, let $\bar X\in \tilde \Sigma$ and assume
 \[
 \bar X =  \bar \lambda \hat X^1_i(\bar Q)+ (1-\bar \lambda)  \hat P^1_i(\bar Q)
 \]
    for some $\bar Q\in B_\beta(P_1^0)$, $i\in\{1,\dots,5\}$ and $0<\bar\lambda<1.$ The goal is to show that $B_\gamma(\bar X)\subset \tilde \Sigma$ for some $\gamma>0.$      (See Figure \ref{fig2} for a typical case of the proof.) 
       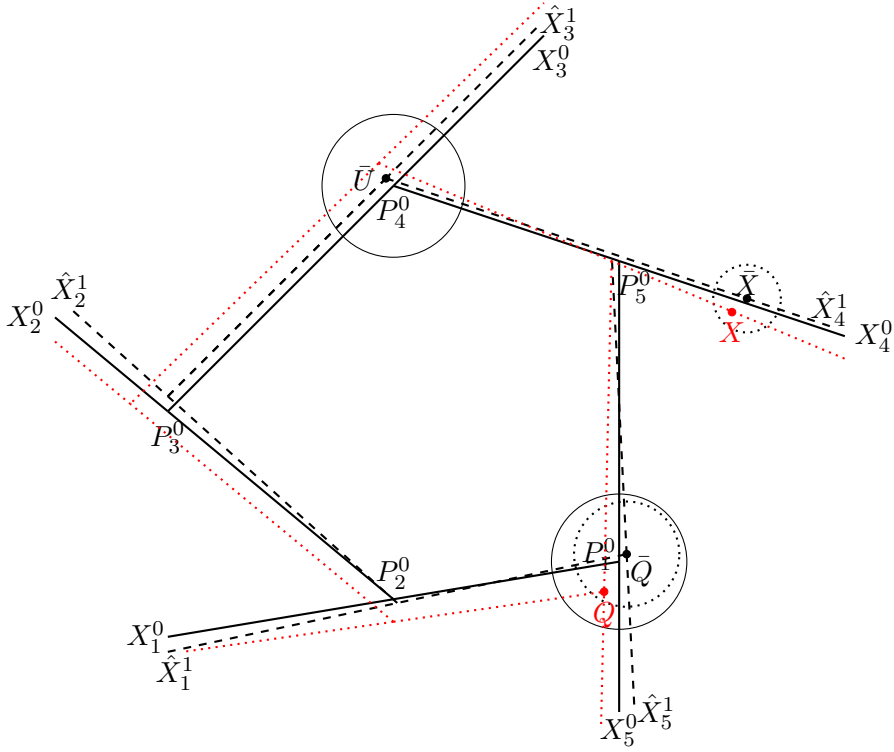
\begin{figure}[ht]
\begin{center}
\begin{tikzpicture}[scale =1]
\draw[thick] (-5,-2)--(1,-1);
\draw[thick,dashed] (-5,-2.2)--(1.1,-0.9);
  \draw(-4.9,-2) node[left]{$X^0_1$};
   \draw[](-4.9,-2.1) node[below]{$\hat X^1_1$};
    \draw[](0.2,5.8) node[above]{$\hat X^1_3$};
      \draw(0.1,6) node[below]{$X^0_3$};
   \draw(1.1,-0.9) node[left]{$P^0_1$};
    \draw(1.2,3) node[below]{$P^0_5$};
         \draw(-5,1) node[below]{$P^0_3$};
          \draw(-2,4) node[below]{$P^0_4$};
     \draw[thick] (1,-1)--(1,3);
      \draw[thick,dashed] (1.1,-0.9)--(0.9,3.1);
  \draw[thick] (1,-1)--(1,-3);
  \draw[thick,dashed] (1.1,-0.9)--(1.205,-3);
  \draw[thick] (-2,-1.5)--(-5,1);
   \draw[thick] (-6.5,2.25)--(-5,1);
   \draw[thick] (-5,1)--(-2,4);
   \draw[thick, dashed] (-5,1.2)--(-2.1,4.1);
    \draw[thick, dashed] (-5,1.2)--(-2.1,4.1);
     \draw[thick,dashed] (-1.95,-1.55)--(-6.3,1.2+2.75*1.3/3.05);
     \draw[thick] (0,6)--(-2,4);
     \draw[thick, dashed] (-0.1,6.1)--(-2.1,4.1);
    \draw[thick] (-2,4)--(1,3);
      \draw[thick, dashed] (-2.1,4.1)--(0.9,3.1);
       \draw[thick] (1,3)--(4,2);
         \draw[thick, dashed] (0.9,3.1)--(3.9,2.1);
     \draw(1,-2.9) node[below]{$X_5^0$};
      \draw[](1.1,-3) node[right]{$\hat X_5^1$};
         \draw(-2,-1.5) node[above]{$P^0_2$};
           \draw(1.3,-0.8) node[below, ]{$\bar Q$};
    \draw(4,2) node[right]{$X_4^0$};
     \draw[](3.8,2) node[above]{$\hat X_4^1$};
   \draw(-6.5,2.25) node[left]{$X^0_2$};
    \draw[](-6.3,2.25) node[above]{$\hat X^1_2$};
 % \draw  (-2,-1.5) circle (0.75cm);
   \draw (1,-1) circle (0.9cm);
  % \draw  (-5,1) circle (0.75cm);
   \draw  (-2,4) circle (0.95cm);
 %  \draw  (1,3) circle (0.75cm);
     \draw[thick,dotted] (2.7,2.5) circle (0.45cm);
       \draw[](2.7,3) node[below]{$\bar X$};
        \draw[fill, ] (2.7,2.5) circle (0.05);
          \draw[fill, ] (1.1,-0.9) circle (0.05);
           \draw[thick,dotted] (1.1,-0.9) circle (0.7);
      
            \draw[fill, ]  (-2.1,4.1) circle (0.05);
         
               %  \draw[red] (-2.1,4.1) circle (0.5cm);
                   % \draw(-3.2,4.3) node{$B_{\eta'}(\bar U)$};
      \draw[red,thick,dotted] (-2.2,4.3)--(4,1.7); 
       \draw[thick, red, dotted] (-2.2,4.3)--(-5.5,1.1); 
         \draw[thick, red, dotted] (-2.2,4.3)--(0,4.3+6.4/3); 
        \draw[thick, red, dotted] (-2,-1.8)--(-5.5,1.1); 
          \draw[thick, red, dotted] (-6.5,-1.8+26.1/7)--(-5.5,1.1); 
        \draw[thick, red, dotted] (-2,-1.8)--(0.8,-1.4); 
               \draw[thick, red, dotted] (-2,-1.8)--(-4.8,-2.2); 
                \draw[thick, red, dotted] (0.9,3)--(0.8,-1.4); 
                 \draw[thick, red, dotted] (0.76,-3.16)--(0.8,-1.4); 
\draw[fill, red] (0.8,-1.4)  circle (0.05); 
\draw[red] (0.8,-1.4) node[below] {$Q$}; 
   \draw[fill, red] (2.5,2.32)  circle (0.05);      
   \draw[red] (2.5,2.32) node[below] {$X$};  
     % \draw(2.8,3.3) node[]{$B_\gamma(\bar X)$};    
            \draw[](-2.1,4.1) node[left]{$\bar U$};  
\end{tikzpicture}
\end{center}
\caption{A typical case of proving $\tilde \Sigma$ is open. Here $i=4$, $\bar Q\in B_\beta(P_1^0)$  and  $\bar U=\hat P_4^1(\bar Q)\in B_\eta(P_4^0).$ The  dashed lines represent the special $T_5$-configuration $(\hat X^1_1,\dots,\hat X^1_5)$ for which  $\bar X\in (\hat X^1_4,\bar U).$  The proof is to show   that every point $X$  in a small ball  $B_\gamma(\bar X)$ lies on the segment $(\hat X^1_4(Q),\hat P^1_4(Q))$ for some $Q\in  B_{\eta'}(\bar Q)\subset B_\beta(P_1^0).$} 
\label{fig2}
\end{figure}

Let $\bar U=\hat P^1_i(\bar Q);$ by (\ref{ImFT-51}),  $\bar U \in B_\eta(P_i^0).$   Let
$z(U)= z^{i}(U)=  Z^{i}_1(Y_{i}(U))$ be the function defined in Lemma \ref{lem-key-1}. By Lemmas \ref{rel-X} and \ref{rel-X-3},   we have
   \begin{equation}\label{eq-last-1}
  \bar X =  \hat P^1_i(\bar Q)+ \bar\lambda z(\hat P^1_i(\bar Q))= \bar U+ \bar\lambda z(\bar U).
  \end{equation}
  
  We proceed with the proof of $B_\gamma(\bar X)\subset \tilde \Sigma$  in two cases.
  
{\bf Case 1:} Assume  $\det (I+\bar\lambda Dz (\bar U))\ne 0.$ 
 In this case, we consider the function  $ G(Q, X)=\hat P_i^1(Q) +\bar\lambda z(\hat P_i^1(Q)) -X$ for  $Q\in  B_\beta(P^0_1)$ and all $X.$  By  (\ref{eq-last-1}),   
 one has $G(\bar Q, \bar X)=0;$ moreover, by (\ref{ImFT-8}),
  \[
 \det \frac{\partial G}{\partial Q}(\bar Q, \bar X) =  \det (I+\bar\lambda Dz (\bar U)  ) \cdot  \det \frac{\partial \hat P^1_i}{\partial Q} (\bar Q) \ne 0.
\]
  Therefore, by the implicit function theorem, there exist balls  $B_{\eta'}(\bar Q)\subset B_\beta(P_1^0)$ and $B_\gamma(\bar X)$ such that for each $X\in B_\gamma(\bar X)$  there exists  $Q\in B_{\eta'}(\bar Q)$ such that   $G(Q,X)=0$; hence 
  \[
  X= \hat P^1_i( Q)+ \bar\lambda z(\hat P^1_i(Q))=\bar \lambda \hat X^1_i(Q)+ (1-\bar \lambda)  \hat P^1_i(Q)\in \tilde\Sigma.
  \]
This proves $B_\gamma(\bar X)\subset \tilde\Sigma.$
 
{\bf Case 2:} Assume $\det (I+\bar\lambda Dz (\bar U))= 0$. In this case, 
let $\bar M=Dz(\bar U).$ Since $0<\bar\lambda <1,$ by Lemma \ref{lem-key-1}, one has  $\bar\lambda =- \mu_{\bar M}^{-1};$  thus, by (\ref{ImFT-6}),    
   $ \bar b= \adj \big (I- \mu_{\bar M}^{-1} \bar M \big ) z (\bar U)\ne 0.$ Consider $G(Q,X)=H(\hat P_i^1(Q), X),$ where
 \[
 H(U,X)= U+ \big ( \bar\lambda + (U-\bar U)\cdot \bar b\big ) z (U)-X
 \]
 for   $U\in  B_\rho(P^0_i)$ and all $X.$  Note that  $G(\bar Q,\bar X)=H(\bar U,\bar X)=0$ and 
 \[
 \frac{\partial H}{\partial  U} (\bar U,\bar X) =I+\bar \lambda \bar M +z(\bar U) \otimes \bar b.
 \]
Thus, by Lemma \ref{lem-key2}, $\det  \frac{\partial H}{\partial  U} (\bar U,\bar X)=[ \adj (I- \mu_{\bar M}^{-1} \bar M ) z(\bar U)] \cdot \bar b=|\bar b|^2\ne 0$, and hence, by (\ref{ImFT-8}),
\[
\det  \frac{\partial G}{\partial  Q} (\bar Q,\bar X)= \det  \frac{\partial H}{\partial  U} (\bar U,\bar X) \cdot \det \frac{\partial \hat P^1_i}{\partial Q} (\bar Q) \ne 0.
\]
The rest of the proof  follows similarly  as in Case 1. 

 This completes the proof.
 \end{proof}

\begin{remk}   For  $0<\lambda<1,$ let
 \begin{equation}\label{last-set}
 \tilde \Sigma_\lambda =\bigcup_{i=1}^5 \{ \lambda \hat X_i^1 (Q)+(1-\lambda)\hat P_i^1(Q):  Q\in B_\beta(P_1^0)\}.
 \end{equation}
By   (\ref{ImFT-6}),  $\det (I+ \lambda M_\nu (U))\ne 0$ for all $\nu$ and $U\in B_\eta(P_\nu^0)$ if $\lambda $ is sufficiently close to 1; thus, as in the Case 1 of the proof, it follows  that  the set $\tilde \Sigma_\lambda$ is open for all $\lambda $  sufficiently close to 1. Therefore,  by selecting a suitable sequence $\{\lambda_k\}\to 1$, the  open sets  $\{\tilde \Sigma_{\lambda_k}\}_{k=1}^\infty$  form   an {\em in-approximation} of  set $K_F.$ In this regard, it would be interesting to see whether   the convex integration scheme of  M\"uller  \& \v Sver\'ak \cite{MSv2} (see also \cite{KMS,MRS, Sz1})  
could be adapted for constructing Lipschitz but  nowhere $C^1$ (in space)   solutions  for the polyconvex gradient flow problem. 
  \end{remk}


\begin{thebibliography}{00}

 \bibitem{AF} 
E. Acerbi  and N. Fusco,  {\em Semicontinuity problems in the calculus of variations,}
Arch. Ration. Mech. Anal., {\bf 86} (1984),  125--145.
              
\bibitem{AGS}
L. Ambrosio, N. Gigli and G. Savar\'e, ``Gradient flows in metric spaces and in the space of probability measures." Birkh\"auser Verlag, Basel,  2005.

\bibitem{Ba} J. M. Ball, {\em Convexity conditions and existence theorems in nonlinear elasticity,} Arch. Ration. Mech. Anal., {\bf 63} (1977), 337--403.

%  
% \bibitem[BJ15]{BJ} J. M. Ball and R. D. James, {\em Incompatible sets of gradients and metastability,} Arch. Ration. Mech. Anal., {\bf 218} (2015), 1363--1416.

\bibitem{Be} J. Bevan, {\em Singular minimizers of strictly polyconvex functionals in $\mathbf R^{2\times 2}$,} Calc. Var. Partial Differential Equations, {\bf  23} (2005), no. 3, 347--372.

\bibitem{Br}  
{H. Br\'ezis}, ``Op\'erateurs maximaux monotones et semi-groupes de contractions dans les espaces de Hilbert."   North-Holland Publishing Co., Amsterdam-London; American Elsevier Publishing Co., Inc., New York, 1973.


%\bibitem[BCGS92]{BC}
%F.~Bethuel, J.~Coron, J.~Ghidaglia and A.~Soyeur,   {\em Heat flows and
%relaxed energies for harmonic maps,} in ``Nonlinear Diffusion Equations and Their Equilibrium States," 99--109, Progr.\ Nonlinear Differential Equations Appl., {\bf 7}, Birkh\"auser; Boston, 1992.

%  \bibitem[BV19]{BV} 
%  T. Buckmaster and V. Vicol, {\em Nonuniqueness of weak solutions to the
%Navier-Stokes equation,} Ann. of Math., {\bf 189} (2019), 101--144.
%  
%  \bibitem[CFG11]{CFG}
%D. Cordoba, D. Faraco and  F. Gancedo, {\em Lack of uniqueness for weak solutions of the incompressible porous media equation},  Arch. Ration. Mech. Anal., {\bf 200} (3) (2011), 725--746.
%

\bibitem{CY} S. Cho and X. Yan, {\em On the singular set for Lipschitzian critical points
of polyconvex functionals,} J. Math. Anal. Appl., {\bf 336} (2007),  372--398.

\bibitem{D}
B. Dacorogna, ``Direct methods in the calculus of variations," Second Edition. Springer-Verlag, Berlin, Heidelberg, New York, 2008. 

%\bibitem[DM97]{DM1} B. Dacorogna and P. Marcellini, {\em General existence theorems for Hamilton-Jacobi equations in the scalar and vectorial cases,} Acta Math., {\bf 178}(1) (1997), 1--37.
%
%\bibitem[DM99]{DM}
%B. Dacorogna and P. Marcellini, ``Implicit partial differential equations."  Birkh\"auser Boston, Inc., Boston, MA, 1999.
%


\bibitem{De}
S. Demoulini, {\em Young measure solutions for a nonlinear parabolic equation of forward-backward type}, SIAM J. Math. Anal.,  {\bf 27} (2)  (1996),   376--403.

\bibitem{DST1}
S. Demoulini, D. Stuart, D. and A.E. Tzavaras, {\em A variational approximation scheme for
three-dimensional elastodynamics with polyconvex energy,} Arch. Ration. Mech. Anal.,
{\bf 157}(4) (2001), 325--344.

\bibitem{DST2}
S. Demoulini, D. Stuart and A. E.  Tzavaras, {\em Weak-strong uniqueness of dissipative
measure-valued solutions for polyconvex elastodynamics,} Arch. Ration. Mech. Anal.,
{\bf 205}(3) (2012),  927--961.

%\bibitem[DSz09]{DS}
%{C. De Lellis and L. Sz\'ekelyhidi Jr}, {\em The Euler equations as a differential inclusion,} Ann. of Math., {\bf 170}(3) (2009), 1417--1436.

\bibitem{Ev}
L.~C.~Evans, {\em Quasiconvexity and partial regularity in the
calculus of variations,} Arch. Ration. Mech.
Anal., {\bf 95} (1986), 227--252.


 

\bibitem{ESG} L.C. Evans, O. Savin and W. Gangbo, {\em Diffeomorphisms and nonlinear heat flows,}  SIAM J. Math. Anal., {\bf 37}(3) (2005), 737--751.

\bibitem{FH} N. Fusco and J. Hutchinson, {\em $C^{1,\alpha}$ partial regularity of functions minimising 
quasi-convex integrals,} Manu. Math.,  {\bf 54} (1985), 121--143.

%
%  \bibitem[FS18]{FSz}
% C. F\"orster and L. Sz\'ekelyhidi Jr., {\em $T_5$-configurations and non-rigid sets of matrices,} Calc. Var.,  (2018), 57:19.
% 
%\bibitem[Gr86]{G1} 
%M. Gromov, ``Partial differential relations."   Springer-Verlag, Berlin, 1986. 

%\bibitem{G2} M. Gromov, {\em Geometric, algebraic, and analytic descendants of Nash isometric embedding theorems,}  Bull. Amer. Math. Soc. (N.S.) {\bf 54}(2) (2017),  173--245. 

%\bibitem[Is18]{Is}
%P. Isett, {\em A proof of Onsager's conjecture,}  Ann. of Math., {\bf 188}(3) (2018), 871--963.
%
%\bibitem[Ki94]{Ki}
%N.~Kikuchi, {\em A method of constructing Morse flows to variational functionals,} Nonlinear World, {\bf 1}(2) (1994), 131--147. 


\bibitem{KK}
S. Kim and Y. Koh, {\em Two-phase solutions for one-dimensional
non-convex elastodynamics,} Arch. Ration. Mech. Anal., {\bf 232}(1) (2019), 489--529. 

%\bibitem[KY15]{KY1}
%S. Kim and B. Yan, {\em Convex integration and infinitely many weak solutions to the Perona-Malik equation in all dimensions}, SIAM J. Math. Anal., {\bf 47}(4) (2015), 2770--2794.
%
%
%\bibitem[KY17]{KY2}
%S. Kim and B. Yan, {\em On Lipschitz solutions for some  forward-backward parabolic  equations. II: the case against Fourier}, Calc. Var., {\bf 56}(3) (2017), Art.67, 36pp.
%
%\bibitem[KY18]{KY3}
%S. Kim and B. Yan, {\em On Lipschitz solutions for some  forward-backward parabolic  equations}, Ann. Inst. H. Poincar\'e Anal. Non Lin\'eaire, {\bf 35}(1) (2018), 65-100.
%

\bibitem{KP}
D.~Kinderlehrer and P.~Pedregal, {\em Weak convergence of integrands and the Young measure
representation,} SIAM J.\ Math.\ Anal., {\bf 23}(1) (1992), 1--19.


\bibitem{KMS} B. Kirchheim, S. M\"uller and V. \v Sver\'ak, {\em Studying nonlinear pde by geometry in matrix space,} in ``Geometric analysis and nonlinear partial differential equations", 347--395, Springer, Berlin, 2003.


%\bibitem[LP17]{LP} M.  Lewicka and M. R. Pakzad, {\em 
%Convex integration for the Monge--Amp\`ere equations in two dimensions,} Anal. PDE, {\bf 10}(3) (2017), 695--727.


%\bibitem[Li96]{Li}
%{G.M. Lieberman}, ``Second order parabolic differential equations."  World Scientific Publishing Co., Inc., River Edge, NJ, 1996.

\bibitem{KM} J. Kristensen and G. Mingione, {\em  The singular set of minima of integral functionals,} Arch. Ration. Mech. Anal., {\bf 180} (3) (2006), 331--398.

 \bibitem{Mo}
C.~B.~Morrey, Jr.,
{\em Quasiconvexity and the lower semicontinuity of multiple integrals,} Pacific J. Math., {\bf 2} (1952), 25--53.


\bibitem{MRS} S. M\"uller, M.O. Rieger and V. \v Sver\'ak, {\em Parabolic equations with nowhere smooth solutions}, Arch. Ration. Mech. Anal., {\bf  177}(1) (2005), 1--20. 


%\bibitem{MSv} {S.~M\"uller and V.~\v Sver\'ak}, {\em   Attainment results for the two-well problem by convex integration,} in ``Geometric analysis and the calculus of variations." 239--251, Internat. Press; Cambridge, MA, 1996.

 
\bibitem{MSv2} {S. M\"uller and V. \v Sver\'ak}, {\em Convex integration for Lipschitz mappings and counterexamples to regularity},  Ann. of Math. (2), {\bf 157}(3)   (2003), 715--742.
%
%\bibitem[MSy01]{MSy}
%{S. M\"uller and M. Sychev}, {\em Optimal existence theorems for nonhomogeneous differential inclusions,} J. Funct. Anal.,  {\bf 181}(2)  (2001), 447--475.
%
%
%\bibitem[Pe97]{Pe}
%P. Pedregal, ``Parametrized measures and variational principles." Birkh\"auser, Basel, 1997.
%
%\bibitem[Ri03]{R} 
%M.O. Rieger, {\em Young measure solutions for nonconvex elastodynamics,} SIAM J. Math.
%Anal., {\bf 34}(6) (2003), 1380--1398.
%
%\bibitem[Sh11]{Sh}
%{R. Shvydkoy}, {\em Convex integration for a class of active scalar equations}, J. Amer. Math. Soc.,  {\bf 24}(4) (2011), 1159--1174.
%
%

  \bibitem{Sz1}
 L. Sz\'ekelyhidi Jr., {\em The regularity of critical points of polyconvex functionals,} Arch. Ration. Mech.  Anal., {\bf 172}(1) (2004), 133--152.
 
%  \bibitem[Sz18]{Sz2}
% L. Sz\'ekelyhidi Jr., {\em Private communication of 12/22/2018.} 

%\bibitem[Ta93]{T} L. Tartar, {\em Some remarks on separately convex functions,}  in ``Microstructure and
%Phase Transitions," IMA Vol. Math. Appl. 54 (D. Kinderlehrer, R. D. James,
%M. Luskin and J. L. Ericksen, eds.), Springer-Verlag, New York (1993), 191--204.
%  
%  \bibitem[Ya03]{Y0} {B.~Yan}, {\em On $W^{1,p}$-solvability for vectorial Hamilton-Jacobi systems,}  {Bulletin des Sciences Mathematiques,} 
%{\bf 127} (2003), 467--483.

%  \bibitem[Ya15]{Y2} {B.~Yan}, {\em On stability and asymptotic behaviours for a
%degenerate Landau-Lifshitz equation,}  Proceedings of the Royal Society of Edinburgh,  {\bf 145A} (2015), 657--668. 

 \bibitem{Y} 
{B.~Yan}, {\em Convex integration for diffusion equations, I: Nonuniqueness and instability.}  (2019) Preprint.
 
%  \bibitem[Z06a]{Zh1}
%K. Zhang, {\em Existence of infinitely many solutions for the one-dimensional Perona-Malik model}, Calc. Var., {\bf 26} (2) (2006), 171--199.
%
%\bibitem[Z06b]{Zh2}
%K. Zhang, {\em On existence of weak solutions for one-dimensional forward-backward diffusion equations}, J. Differential Equations, {\bf 220} (2) (2006), 322--353.
%

\end{thebibliography}
\end{document}